\documentclass[reqno, 11pt, a4paper]{amsart}
\usepackage{amsmath, amssymb}  
\usepackage{amsthm}
\usepackage{graphicx, color}
\usepackage[foot]{amsaddr}
\usepackage{mathtools}
\usepackage{mathrsfs, enumerate}
\usepackage{hyperref}
\usepackage[truedimen,top=3truecm, bottom=3truecm, left=2.5truecm, right=2.5truecm, includefoot]{geometry} 
\begin{document}

\title[Equilibrium fluctuations for totally asymmetric interacting particles]{Equilibrium fluctuations for totally asymmetric interacting particle systems}
\author[Kohei Hayashi]{Kohei Hayashi}
\address{Graduate School of Mathematical Sciences, The University of Tokyo, Komaba, Tokyo 153-8914, Japan.}
\email{kohei@ms.u-tokyo.ac.jp}
\keywords{KPZ equation, stochastic Burgers equation, interacting particle systems, $q$-TASEP}
\subjclass[2000]{60K35, 60H15}
\maketitle

\newtheorem{definition}{Definition}[section]
\newtheorem{theorem}{Theorem}[section] 
\newtheorem{lemma}[theorem]{Lemma}
\newtheorem{corollary}[theorem]{Corollary}
\newtheorem{proposition}[theorem]{Proposition}
\newtheorem{remark}{Remark}[section]
\newtheorem{assumption}{Assumption}[section]
\newtheorem{example}{Example}[section]

\makeatletter
\renewcommand{\theequation}{%
\thesection.\arabic{equation}}
\@addtoreset{equation}{section}
\makeatother

\makeatletter
\renewcommand{\p@enumi}{A}
\makeatother

\newcounter{num}
\newcommand{\Rnum}[1]{\setcounter{num}{#1}\Roman{num}}

\begin{abstract}
We study equilibrium fluctuations for a class of totally asymmetric zero-range type interacting particle systems. As a main result, we show that density fluctuation of our process converges to the stationary energy solution of the stochastic Burgers equation. As a special case, microscopic system we consider here is related to $q$-totally asymmetric simple exclusion processes ($q$-TASEPs) and our scaling limit corresponds to letting the quantum parameter $q$ to be one. 
\end{abstract}

\section{Introduction}
In this paper, we have an interest in Kardar-Parisi-Zhang (KPZ) equation, which is a stochastic partial differential equation of unknown function $h = h (t, x ) $ where $ ( t , x ) \in [ 0, \infty ) \times \mathbb{R} $ with the form 
\begin{equation}
\label{eq:KPZintro}
\partial_t h = \nu \partial_x^2 h + \lambda (\partial_x h )^2 + \sqrt{D} \dot{W}  ( t, x )  . 
\end{equation}
Here $\nu, D > 0 $ and $\lambda \in \mathbb{R}$ are constants and $\dot{W} (t, x  ) $ is the space-time white noise. Or equivalently, we focus on its tilt $u = \partial_x h$ which satisfies the stochastic Burgers equation (SBE) 
\begin{equation}
\label{eq:SBEintro}
\partial_t u = \nu \partial_x^2 u + \lambda \partial_x u^2 + \sqrt{D} \partial_x \dot{W}  ( t, x ) .
\end{equation}
Throughout this paper, we only consider the one-dimensional setting. KPZ equation is introduced in \cite{kardar1986dynamic} as a model to describe random interface evolution. The main interest of this paper is universality of interface growth. Before discuss in detail the universality, we mention solution theory of KPZ equation. Looking the SBE \eqref{eq:SBEintro}, the solution $u$ is typically expected to have the same regularity as the space-time white noise. In particular, it takes values on distribution and thus the non-linear term $\partial_x u^2$ cannot be defined naively. As a consequence, the equation \eqref{eq:SBEintro} and also \eqref{eq:KPZintro} are called singular type equation in this sense. One way to use Cole-Hopf transformation $Z = \exp ((\lambda /\nu)h )$. Then the transformed process $Z$ satisfies the stochastic heat equation with multiplicative noise
\begin{equation}
\label{eq:SHEintro}
\partial_t Z = \nu \partial_x^2 Z + \frac{\lambda \sqrt{D}}{\nu} Z\dot{W} (t, x ) .
\end{equation}
Now we can give a meaning to the solution of \eqref{eq:SHEintro} in a classical way and then the solution to KPZ equation can be defined by $h = (\nu / \lambda) \log Z$, which is called the Cole-Hopf solution. However, such a good transformation is restrictive and a solution theory which directly give a meaning to singular stochastic differential equations is preferable. To prove well-posedness of \eqref{eq:KPZintro} itself without using the Cole-Hopf transformation, a renormalization procedure which roughly subtract ``$-\infty$'' from the singular term is needed. Such a renormalization is conducted in a mathematically rigorous way in \cite{hairer2013solving} for the first time, and well-posedness of KPZ equation is proved there. And then the solution theory is generalized as the regularity structure theory in \cite{hairer2014theory} covering more wide range of singular stochastic partial differential equations. On the other hand, the paper \cite{gubinelli2015paracontrolled} introduced the notion of paracontrolled calculus and then global well-posedness of KPZ equation is shown in \cite{gubinelli2017kpz} based on paracontrolled calculus. Both solution theories are established as generalization of rough path theory, and particularly the solutions are constructed based on a pathwise approach rather than a probabilistic one. For a probabilistic construction, though restricted on the stationary case, the notion of energy solution is introduced in \cite{gonccalves2014nonlinear} as a martingale problem formulation and existence of the solution is shown. Then uniqueness of energy solution is proved in \cite{gubinelli2018energy}.

The main interest of this paper is to derive \eqref{eq:KPZintro} as an equation which describes macroscopic interface evolution, by taking scaling limits of microscopic models. Until now, several microscopic models from which KPZ equation is derived by scaling limits are known. In particular, for microscopic models under equilibrium state, the notion of energy solution gives us a robust way to derive KPZ equation as scaling limits. Here we briefly review results on the universality of KPZ equation for stationary models. (See \cite{corwin2012kardar} for progress in this decades containing also non-stationary cases.) As to stationary case, \cite{bertini1997stochastic} is a celebrating result, which proved that density fluctuation of simple exclusion processes with weak asymmetric jump rates converges to the Cole-Hopf solution of SBE. After that, \cite{gonccalves2014nonlinear} generalized the result of \cite{bertini1997stochastic} to wider class of jump rates and remarkably they established a robust way to derive KPZ equation without using Cole-Hopf transformation: \cite{gonccalves2015stochastic} for interacting particle systems containing zero-range processes, \cite{diehl2017kardar} for a system of stochastic differential equations and \cite{jara2019scaling} for the Sasamoto-Spohn model, which is originally introduced in \cite{sasamoto2009superdiffusivity}. Other important class from which KPZ equation is derived is directed polymers, which is introduced in \cite{huse1985pinning} and mathematically analyzed in \cite{imbrie1988diffusion} for the first time. As to the stationary case, recently \cite{jara2020stationary} derived the stochastic Burgers equation from free-energy fluctuation of the stationary O'Connell-Yor model (\cite{o2001brownian}). On the other hand, a some relation between the O'Connell-Yor polymer and an interacting particle system is pointed out: the $q$-deformation of totally asymmetric simple exclusion process ($q$-TASEP, in short) with parameter $q \in (0, 1)$ is introduced in \cite{borodin2014macdonald} and moreover it is proved that the $q$-TASEP in some sense converges to the O'Connell-Yor polymer as $q \to 1$. (See also \cite{borodin2014duality}.) From this degeneration result, it is expected that the stochastic Burgers equation can also be derived by scaling limits of $q$-TASEPs. In this paper we consider a class of totally asymmetric interacting particle systems where particles on one dimensional lattice move only to one direction, containing $q$-TASEP model as a special case. As a main result, we show that the stochastic Burgers equation is derived from our model. 


\section{Main results}
\label{sec:model}

\subsection{Stationary energy solution of KPZ/SBE}
In the sequel, we write $\mathbb{R}_+ \coloneqq [ 0, \infty ) $. Let $\nu, D > 0  $ and $\lambda \in \mathbb{R}$ be fixed constants and consider $(1 + 1 )$-dimensional KPZ equation  
\begin{equation}
\label{KPZ}
\partial_t h = \nu \partial_x^2 h  + \lambda  (\partial_x h)^2 +  \sqrt{D} \dot{W} (t, x )  
\quad \text{ in } \mathbb{R}_+ \times \mathbb{R} .
\end{equation}
Then recall that the tilt $u = \partial_x h $ satisfies the stochastic Burgers equation
\begin{equation}
\label{SBE}
\partial_t u = \nu \partial_x^2 u  + \lambda \partial_x u^2 + \sqrt{D} \partial_x \dot{W} (t, x )  
\quad \text{ in } \mathbb{R}_+ \times \mathbb{R} .  
\end{equation}
As a preliminary we recall the notion of \textit{stationary energy solution}. The same formulation can be applied for KPZ equation \eqref{KPZ} so that we focus only on the stochastic Burgers equation \eqref{SBE}. Now we begin with the definition of stationarity. 


\begin{definition}
We say that an $\mathcal{S}^\prime (\mathbb{R})$-valued process $u = \{ u_t  : t \in [0,T] \} $ satisfies condition \textbf{(S)} if for all $t \in [0,T]$, the random variable $u_t$ has the same distribution as space white noise with variance $D/(2\nu)$. 
\end{definition}

For a process $u = \{ u_t: t \in [0,T]\}$ satisfying the condition \textbf{(S)}, we define 
\[
\mathcal{A}^\varepsilon_{ s, t } (\varphi ) = \int_s^t \int_{\mathbb{R} } u_r (\iota_\varepsilon (x; \cdot) )^2 \partial_x \varphi (x ) dx dr .  
\]
for every $0 \le s < t \le T $, $\varphi \in \mathcal{S} (\mathbb{R} ) $ and $\varepsilon > 0 $. Here we defined the function $\iota_\varepsilon (x ; \cdot ) : \mathbb{R} \to \mathbb{R}  $ by $\iota_{ \varepsilon } (x ; y) =  \varepsilon^{ - 1 } \mathbf{1}_{ [ x , x  + \varepsilon ) } (y) $ for each $x \in \mathbb{R} $. 

\begin{definition}
Let $u = \{ u_t :t \in [0,T]\}$ be a process satisfying the condition \textbf{(S)}. We say that the process $u$ satisfies the energy estimate if there exists a constant $\kappa > 0$ such that:
\begin{itemize}
\item[\textbf{(EC1)}]
For any $\varphi \in \mathcal{S} (\mathbb{R} )$ and any $0 \le s < t \le T$,
\begin{equation*}
\mathbb{E}_n \bigg[ \bigg| \int_s^t u_r (\partial_x^2 \varphi ) dr  \bigg|^2 \bigg] \le \kappa  (t- s ) \| \partial_x \varphi \|^2_{ L^2(\mathbb{R} ) } .
\end{equation*}

\item[\textbf{(EC2)}]
For any $\varphi \in \mathcal{S} (\mathbb{R} )$, any $0 \le s < t \le T$ and any $0 < \delta < \varepsilon < 1 $,  
\begin{equation*}
\mathbb{E}_n \big[ \big| \mathcal{A}^\varepsilon_{ s, t } (\varphi ) - \mathcal{A}^\delta_{ s, t } (\varphi ) \big|^2 \big] 
\le \kappa \varepsilon (t- s ) \| \partial_x \varphi \|^2_{ L^2(\mathbb{R} ) } .
\end{equation*}
\end{itemize}
\end{definition}

Then the following result is proved in \cite{gonccalves2014nonlinear}. 

\begin{proposition}
\label{nonlinear}
Assume $\{ u_t : t \in [0, T ] \} $ satisfies the conditions \textbf{(S)} and \textbf{(EC2)}. Then there exists an $\mathcal{S}^\prime (\mathbb{R} )$-valued process $\{ \mathcal{A}_t : t \in [0, T ] \} $ with continuous trajectories such that  
\[
\mathcal{A}_t (\varphi ) = \lim_{ \varepsilon \to 0 } \mathcal{A}^\varepsilon_{ 0, t } (\varphi) 
\]
in $L^2 $ for every $t \in [0, T ] $ and $\varphi \in \mathcal{S} (\mathbb{R} ) $.  
\end{proposition}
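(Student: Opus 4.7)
The plan has three stages: pointwise existence of $\mathcal{A}_t(\varphi)$ as an $L^2$ limit, promotion to a random element of $\mathcal{S}^\prime(\mathbb{R})$, and construction of a continuous modification in time. First, fix $\varphi \in \mathcal{S}(\mathbb{R})$ and $t \in [0,T]$. For any $0 < \delta < \varepsilon < 1$, condition \textbf{(EC2)} with $s = 0$ yields
$$\mathbb{E}\bigl[|\mathcal{A}^\varepsilon_{0,t}(\varphi) - \mathcal{A}^\delta_{0,t}(\varphi)|^2\bigr] \le \kappa \varepsilon t \|\partial_x \varphi\|^2_{L^2(\mathbb{R})},$$
which tends to zero as $\varepsilon \to 0$ uniformly in $\delta \in (0, \varepsilon)$. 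Hence $\{\mathcal{A}^\varepsilon_{0,t}(\varphi)\}_{\varepsilon > 0}$ is Cauchy in $L^2(\Omega)$ and I would define $\mathcal{A}_t(\varphi)$ as its $L^2$ limit.

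To show that $\varphi \mapsto \mathcal{A}_t(\varphi)$ is, after modification, a tempered distribution, I observe that linearity in $\varphi$ passes to the $L^2$ limit, and a Schwartz seminorm bound on $\|\mathcal{A}_t(\varphi)\|_{L^2}$ follows from telescoping \textbf{(EC2)} from $\varepsilon = 1$ down to $0$ (the geometric series $\sum_n \sqrt{2^{-n}}$ converges) together with a direct second-moment estimate of $\mathcal{A}^1_{0,t}(\varphi)$: by \textbf{(S)} the random field $u_r$ is space white noise at each fixed $r$, so $u_r(\iota_1(x;\cdot))$ is centered Gaussian and the required moment is a closed-form computation using the covariance $\mathbb{E}[u_r(\iota_1(x;\cdot)) u_r(\iota_1(x^\prime;\cdot))] = \tfrac{D}{2\nu} |[x,x+1) \cap [x^\prime,x^\prime+1)|$. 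A standard Kolmogorov-type construction of $\mathcal{S}^\prime(\mathbb{R})$-valued processes then upgrades the family $\{\mathcal{A}_t(\varphi)\}_\varphi$ to a genuine random distribution.

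The main difficulty is producing a modification with continuous trajectories in $t$. For each fixed $\varepsilon > 0$ the map $t \mapsto \mathcal{A}^\varepsilon_{0,t}(\varphi)$ is almost surely Lipschitz by inspection of the defining integral, so it suffices to show that along the dyadic subsequence $\varepsilon_n = 2^{-n}$ the convergence $\mathcal{A}^{\varepsilon_n}_{0,t}(\varphi) \to \mathcal{A}_t(\varphi)$ is uniform in $t \in [0,T]$ almost surely. My plan is to upgrade the pointwise estimate of \textbf{(EC2)} to a supremum bound in $t$: partition $[0,T]$ into dyadic sub-intervals, apply \textbf{(EC2)} on each one via the additivity $\mathcal{A}^\varepsilon_{0,t} - \mathcal{A}^\varepsilon_{0,s} = \mathcal{A}^\varepsilon_{s,t}$, and combine them through a chaining (Garsia--Rodemich--Rumsey type) argument exploiting the stationarity built into \textbf{(S)}. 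Markov's inequality and Borel--Cantelli then deliver almost sure uniform convergence along $\varepsilon_n$, so the limit inherits continuous trajectories.

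The hard step is precisely this last supremum estimate. Condition \textbf{(EC2)} only controls a pointwise-in-$(s,t)$ second moment, and the naive bound $\mathbb{E}[|\mathcal{A}^\varepsilon_{s,t}(\varphi)|^2] \le C_\varepsilon (t-s)^2$ has a constant that blows up as $\varepsilon \to 0$. Closing this gap appears to require an honest use of the Gaussianity of $u_r$ at each fixed time, through hypercontractivity of the second-chaos variables $u_r(\iota_\varepsilon(x;\cdot))^2$, combined with the additive-in-time structure of $\mathcal{A}^\varepsilon$ to obtain a supremum estimate that is simultaneously small in $|t-s|$ and in $\varepsilon$ along the subsequence. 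This is the step where the specific structure of the energy solution framework is really exploited, and it is where I would expect to spend the bulk of the proof.
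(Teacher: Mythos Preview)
The paper does not give its own proof of this proposition: immediately before the statement it writes ``Then the following result is proved in \cite{gonccalves2014nonlinear}'' and simply quotes the result from Gon\c{c}alves--Jara. There is therefore no in-paper argument to compare against.

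That said, your sketch is essentially the argument in the cited reference, and you have located the non-trivial point correctly. The pointwise $L^2$ Cauchy step from \textbf{(EC2)} is immediate, and the obstruction to continuous trajectories is exactly that \textbf{(EC2)} alone only yields a second-moment bound linear in $t-s$, which is not enough for Kolmogorov. The resolution in Gon\c{c}alves--Jara is the combination you indicate: under \textbf{(S)} the variables $u_r(\iota_\varepsilon(x;\cdot))^2$ live in a fixed Wiener chaos of the Gaussian field $u_r$, so hypercontractivity upgrades second moments to $p$-th moments with a constant depending only on $p$. One then interpolates the energy bound $\mathbb{E}\big[|\mathcal{A}_{s,t}(\varphi)-\mathcal{A}^\varepsilon_{s,t}(\varphi)|^2\big]\le C\varepsilon(t-s)$ (obtained from \textbf{(EC2)} by sending $\delta\to 0$) with the crude direct bound $\mathbb{E}\big[|\mathcal{A}^\varepsilon_{s,t}(\varphi)|^2\big]\le C\varepsilon^{-1}(t-s)^2$, optimizes $\varepsilon\sim (t-s)^{1/2}$ to get $\mathbb{E}\big[|\mathcal{A}_t(\varphi)-\mathcal{A}_s(\varphi)|^2\big]\le C(t-s)^{3/2}$, boosts to all $p$ via hypercontractivity, and applies Kolmogorov. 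This is cleaner than the chaining/Borel--Cantelli route you propose and avoids having to prove a uniform-in-$t$ supremum estimate for $\mathcal{A}^{\varepsilon_n}-\mathcal{A}^{\varepsilon_{n+1}}$ directly; either way the Gaussian hypercontractivity coming from \textbf{(S)} is the genuine input, as you correctly anticipated.
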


By this proposition, thinking the singular term $\partial_x u^2 $ is given by this quantity, we can define a solution of \eqref{SBE} as follows.

\begin{definition}
\label{def:energysol}
We say that an $\mathcal{S}^\prime(\mathbb{R})$-valued process $u=\{u (t, \cdot) : t\in [0,T] \}$ is a stationary energy solution of the stochastic Burgers equation \eqref{SBE} if 
\begin{enumerate}
\item The process $u$ satisfies the conditions \textbf{(S)}, \textbf{(EC1)} and \textbf{(EC2)}. 
\item For all $\varphi \in \mathcal{S} (\mathbb{R} )$, the process 
\[
u_t ( \varphi ) - u_0 (\varphi ) - \nu \int_0^t u_s (\partial_x^2 \varphi ) ds - \mathcal{A}_t (\varphi ) 
\]
is a martingale with quadratic variation $D \| \partial_x^2 \varphi \|^2_{ L^2 (\mathbb{R} ) } t $ where $\mathcal{A}$ is the process obtained in Proposition \ref{nonlinear}. 
\item For all $\varphi \in \mathcal{S} (\mathbb{R} )$, writing $\hat{ u }_t = u_{ T - t } $ and $\hat{ \mathcal{A} }_t = - (\mathcal{A}_T - \mathcal{A}_{ T- t })$, the process
\[
\hat{ u }_t ( \varphi ) - \hat{ u }_0 (\varphi ) - \nu \int_0^t \hat{ u }_s (\partial_x^2 \varphi ) ds - \hat{ \mathcal{A} }_t (\varphi ) 
\]
is a martingale with quadratic variation $D \| \partial_x^2 \varphi \|^2_{ L^2 (\mathbb{R} ) } t $. 
\end{enumerate}
\end{definition}

Then it is proved that there exists a unique-in-law stationary energy solution of \eqref{SBE}. Existence was shown in \cite{gonccalves2014nonlinear} and then uniqueness was proved in \cite{gubinelli2018energy}.

\subsection{Model and result}
Throughout this paper we write $\mathbb{N} = \{ 1, 2, \ldots \}$ and $\mathbb{Z}_+ = \{ 0, 1, \ldots \}$. Let $\mathscr{X} = \mathbb{Z}_+^{ \mathbb{Z} } $ be a configuration space and we consider Markov processes which takes values on $\mathscr{X}$. We write an element in the configuration space $\mathscr{X} $ by Greek letters $\eta = \{ \eta_j : j \in \mathbb{Z} \}$ where $\eta_j  $ denotes the number of particles on a site $j \in \mathbb{Z}$. Let $c:\mathbb{Z}_+ \to \mathbb{R}_+$ be such that $c(0)=0$ and take $p_n, q_n \in [0,1]$ satisfying $p_n + q_n =1$. Then zero-range process is a Markov process with generator 
\[
 f (\eta)
\mapsto n^2 \sum_{j \in \mathbb{Z}} p_n c(\eta_j) \nabla_{j,j-1} f(\eta)
+ n^2 \sum_{j \in \mathbb{Z}} q_n c(\eta_j) \nabla_{j,j+1} f(\eta)
\]
acting on each local function $f : \mathscr{X} \to \mathbb{R}$. Here $\nabla_{ j , j + 1 } f ( \eta ) = f ( \eta^{ j, j + 1 } ) - f (\eta) $ and $\eta^{ j , j + 1 } $ denotes the configuration after a particle jumps from a site $j $ to $j + 1 $ if there exists at least one particle on the site $j$:  
\[
\eta^{ j , j + 1 }_k =
\begin{cases}
\begin{aligned}
& \eta_j -1 && \text{ if } k= j , \\
& \eta_{ j + 1} + 1 && \text{ if } k= j + 1 , \\
& \eta_k &&\text{ otherwise.}
\end{aligned}
\end{cases}
\]
The factor $n^2$ is needed to obtain non trivial limit under diffusive scaling. In \cite{gonccalves2015stochastic}, the stochastic Burgers equation is derived in weakly asymmetric regime where $q_n -p_n = O(n^{-1/2})$ as $n$ tends to infinity. Instead, we consider totally asymmetric regime where $q_n - p_n = O(1)$ assuming also the jump rate function $c$ depends on $n$ in an appropriate manner. To simplify the notation, we set $p_n = 0$ and $q_n =1$ in the sequel. Moreover, let $g $ be a positive function on $\mathbb{R}_+$ satisfying the following condition. 

\begin{assumption}
\label{ass:regularity}
Assume the function $g \in C^4_b ( \mathbb{R}_+ : \mathbb{R}_+ ) $ is strictly increasing where $C^4_b$ denotes the family of $C^4$-smooth functions whose all derivatives are bounded, and satisfies $g (0) = 0 $ and $g^\prime(0) > 0$. \end{assumption}

For the function $g$ satisfying Assumption \ref{ass:regularity}, we write $g_n (k ) = n^{ 1/2  } g (n^{ - 1/2 } k ) $ for each $k \in \mathbb{Z}_+ $ and consider the zero-range process with jump rate $c = g_n$ in the above. In other words, we define an operator $L_n$ acting on each local fucntion $f : \mathscr{X} \to \mathbb{R}$ by 
\[
L_n f (\eta ) = n^{ 2 }  \sum_{ j \in \mathbb{Z} } g_n ( \eta_j ) \nabla_{ j , j + 1 } f  (\eta ) ,
\]
and hereafter we consider a Markov process $\eta^n = \{ \eta^n (t ) : t \ge 0 \} $ on $\mathscr{X} $ with infinitesimal generator $L_n $. See Section 2.6 in \cite{kipnis1998scaling} about construction of zero-range processes on infinite volume space where monotonicity of jump rate is postulated. For any probability measure $\mu $ on $\mathscr{X} $, let $\mathbb{P}^n_\mu $ be the distribution of $\eta^n $ on $D ( \mathbb{R}_+ : \mathscr{X} ) $ starting form the initial distribution $\mu $ where $D ( \mathbb{R}_+ : \mathscr{X} ) $ denotes the space of right-continuous processes with left-limits taking values in $\mathscr{X} $ endowed with the Skorohod topology. 

Next we prepare a family of invariant measures of the process $\eta^n $ which are parametrized by density. First for each $\alpha > 0 $, let $\overline{\nu}_\alpha $ be a probability measure on $\mathscr{X} $ whose common marginal is given by 
\[
\overline{\nu}_\alpha (\eta_j = k ) = \frac{1}{ Z_n(\alpha) } \frac{ \alpha^k }{ g_n!(k )}    , \quad 
Z_n (\alpha) = \sum_{ k \ge 0 } \frac{ \alpha^k }{ g_n!(k) }   
\]
where we defined $g_n ! (k ) = g_n (k) \cdots g_n (1 ) $ for each $k \in \mathbb{N}$ and $g_n ! (0) = 1 $. Let $\alpha^*_n$ be the radius of convergence of the partition function $Z_n(\alpha)$. 

\begin{assumption}
\label{ass:density}
Assume that $Z_n(\alpha)$ diverges as $\alpha$ converges to $\alpha^*_n$ for each $n$. 
\end{assumption}

For each $\rho > 0 $, we choose $\Phi_n = \Phi_n (\rho ) $ so that $E_{ \overline{\nu}_{ \Phi_n } } [ \eta_j ] = \rho $ for each $j \in \mathbb{Z}$. This is possible according to Assumption \ref{ass:density}. In this case we have $\Phi_n (\rho ) = E_{\nu^n_\rho}[g_n (\eta )] $ and it is easily verified that 
\begin{equation}
\label{eq:Phi}
\lim_{n \to \infty } \Phi_n (\rho) = g^\prime (0) \rho .
\end{equation}
An example of function $g$ satisfying Assumptions \ref{ass:regularity} and \ref{ass:density} will be given in subsection \ref{subsec:qtasep}.
Hereafter we simply write $\nu^n_\rho = \overline{\nu}_{ \Phi_n (\rho)} $. 
Then, it is straightforward that the measure $\nu^n_\rho$ satisfies the detailed balance condition and thus $\nu^n_\rho$ is invariant for the process $\eta^n $. To be concerned with equilibrium fluctuations, we only consider the situation when the process starts form these invariant measures. We write $\mathbb{P}_n = \mathbb{P}^n_{ \nu^n_\rho } $ and write the expectation with respect to $\mathbb{P}_n $ by $\mathbb{E}_n $. 

Now we state our main result. For any given constant $ T > 0$, we define density fluctuation field $\{ \mathcal{X}^n_t : t \in [0, T ] \} $ with values on $D ([0, T ], \mathcal{S}^\prime (\mathbb{R} ) ) $ whose action on any test function $\varphi \in \mathcal{S} (\mathbb{R} ) $ is given by  
\begin{equation}
\label{fluctuation}
\mathcal{X}^{  n }_t (\varphi ) = \frac{1}{ \sqrt{n } } \sum_{ j \in \mathbb{Z} } ( \eta^{ n }_j (t ) - \rho ) \varphi \bigg( \frac{ j - f_n t }{ n }  \bigg)   
\end{equation}
where $f_n = f_n (g, \rho) = b_2 n^2 + b_1 n^{ 3 /2  } + b_0 n  $ with 
\begin{equation}
\label{eq:framing}
\begin{aligned}
& b_2 = g^{ \prime } ( 0 ) , \quad 
b_1 = \frac{ 1  }{ 2 } g^{ \prime \prime } (0) ( 1 + 2 \Phi_n (\rho) ) ,  \\
& b_0 =  \frac{g^{(3)}(0)}{6g^\prime(0)}
(1 + 6\Phi_n(\rho) + 3\Phi_n(\rho)^2) 
- \frac{ g^{ \prime \prime } (0)^2 }{ 4g^\prime(0)^2 } 
(1 + 10 \Phi_n(\rho) + 9\Phi_n(\rho)^2) .
\end{aligned}
\end{equation}
Here $\rho = E_{ \nu_\rho } [ \eta (j ) ]$ stands for the density which is conserved for each process $\eta^n = \{ \eta^n (t) : t \ge 0 \}$. The main result of this paper is the following.

\begin{theorem}
\label{mainthm}
Let $\mathcal{X}^n_t $ be the density fluctuation field defined by \eqref{fluctuation} for each zero-range process $\eta^n = \{ \eta^n_t  : t\in [0, T ] \} $. Then the process $\{ \mathcal{X}^n_t : t \in [0, T] \} $ which takes values in $D ([0, T ] , \mathcal{S}^\prime (\mathbb{R})) $ converges in distribution to a unique stationary energy solution $\{ u ( t, \cdot ) : t \in [0, T ] \} $ of the stochastic Burgers equation
\begin{equation}
\label{SBEthm}
\partial_t u =  \frac{ 1 }{ 2 }  g^\prime (0 ) \partial_x^2 u - \frac{ 1 }{2} g^{\prime \prime } (0 ) \partial_x u^2 + \sqrt{ g^\prime ( 0 ) \rho } \partial_x \dot{W} (t, x )  .
\end{equation}
\end{theorem}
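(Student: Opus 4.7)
The plan is to identify every subsequential limit of $\{\mathcal X^n\}$ in $D([0,T],\mathcal S'(\mathbb R))$ as a stationary energy solution of \eqref{SBEthm} in the sense of Definition \ref{def:energysol} and conclude by the uniqueness result of \cite{gubinelli2018energy}. This amounts to establishing tightness, verifying the stationarity condition \textbf{(S)}, verifying the energy estimates \textbf{(EC1)}--\textbf{(EC2)}, and verifying the forward and backward martingale problems. Condition \textbf{(S)} is immediate from the CLT for the i.i.d.\ marginals of $\nu^n_\rho$, whose one-site law converges to a Poisson of mean $\rho$ (so with variance $\rho = D/(2\nu)$, consistent with the coefficients in \eqref{SBEthm}).

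The core of the argument is the analysis of Dynkin's formula
\[
\mathcal X^n_t(\varphi) = \mathcal X^n_0(\varphi) + \int_0^t (\partial_s + L_n)\mathcal X^n_s(\varphi)\,ds + M^n_t(\varphi),
\]
with $M^n$ a mean-zero martingale. Expanding the discrete gradient $\varphi_{j+1}-\varphi_j$ to order $n^{-3}$, Taylor expanding
\[
g_n(k) = g'(0)k + \tfrac12 g''(0)\,n^{-1/2}k^2 + \tfrac16 g^{(3)}(0)\,n^{-1}k^3 + O(n^{-3/2}k^4),
\]
and including the moving-frame contribution $\partial_s\varphi = -(f_n/n)\partial_x\varphi$, I decompose the drift into: a transport of magnitude $g'(0)n^2$ cancelled by $b_2$; scalar contributions of orders $n^{3/2}$ and $n$ arising from the invariant moments $E_{\nu^n_\rho}[\eta^m]$ ($m=2,3$), cancelled exactly by the choices of $b_1,b_0$ in \eqref{eq:framing} (the precise formulas there are the Taylor coefficients of $\rho\mapsto E_{\nu^n_\rho}[g_n(\eta)^m]$); a Laplacian term $\tfrac12 g'(0)\mathcal X^n_s(\partial_x^2\varphi)$ matching the diffusion; and a nonlinear remainder of schematic form $\tfrac12 g''(0)\,n^{-1/2}\sum_j\bigl(\eta_j^2 - \text{affine in }\eta_j\bigr)\partial_x\varphi_j$ plus cubic correctors controlled using $g \in C_b^4$. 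The quadratic variation of $M^n_t(\varphi)$ equals $n\int_0^t\sum_j g_n(\eta^n_j(s))(\varphi_{j+1}-\varphi_j)^2\,ds$, which converges by stationarity and $E_{\nu^n_\rho}[g_n(\eta_j)] = \Phi_n(\rho) \to g'(0)\rho$ to $g'(0)\rho\,t\,\|\partial_x\varphi\|^2_{L^2(\mathbb R)}$, matching the noise intensity in \eqref{SBEthm}. Tightness then follows from Mitoma's criterion together with the uniform second-moment bounds provided by stationarity. For the backward problem, the adjoint of $L_n$ with respect to $\nu^n_\rho$ is the mirror-reflected zero-range process with left jumps and the same rate $g_n$; the asymmetric terms flip sign, producing the $+\tfrac12 g''(0)$ in front of $\mathcal A$ required by Definition \ref{def:energysol}(3).

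The main obstacle is the second-order Boltzmann--Gibbs principle that replaces the nonlinear remainder by $\tfrac12 g''(0)\int_0^t\!\int_{\mathbb R}\partial_x\varphi\,(\mathcal X^{n,\varepsilon}_s)^2\,dx\,ds$, where $\mathcal X^{n,\varepsilon}_s(x)=\mathcal X^n_s(\iota_\varepsilon(x;\cdot))$, with error vanishing as $n\to\infty$ and then $\varepsilon\to 0$. This replacement simultaneously identifies the limiting $\mathcal A_t(\varphi)$ with $-\tfrac12 g''(0)\int_0^t\int u_s^2\,\partial_x\varphi$ and supplies condition \textbf{(EC2)}; condition \textbf{(EC1)} then follows from the Laplacian term and stationarity. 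In the weakly asymmetric regime of \cite{gonccalves2014nonlinear,gonccalves2015stochastic} one invokes the Kipnis--Varadhan $H^{-1}$ inequality against the symmetric part of the generator; here, because the symmetric part of $L_n$ contributes at the same order $n^2$ as the antisymmetric one rather than as a subleading dissipation, one must first rewrite the centred quadratic local function $\eta_j^2 - \partial_\rho E_{\nu^n_\rho}[\eta^2]\,(\eta_j-\rho) - E_{\nu^n_\rho}[\eta^2]$ as a spatial discrete gradient of a flux (a current identity tailored to the scaled rate $g_n$), and then compare the resulting weighted current to the Dirichlet form of the symmetrisation $(L_n + L_n^*)/2$ via spectral-gap estimates for zero-range processes on boxes of size $\varepsilon n$. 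Once this quantitative replacement is in hand, passage $n\to\infty$ followed by $\varepsilon\to 0$ identifies the limit with the energy solution and completes the proof.
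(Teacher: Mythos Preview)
Your overall architecture is right: Dynkin decomposition, Taylor expansion of $g_n$, cancellation of transport by the framing $f_n$, convergence of the quadratic variation to $g'(0)\rho\,\|\partial_x\varphi\|_{L^2}^2 t$, tightness via Mitoma, and identification of every limit point as an energy solution followed by the uniqueness of \cite{gubinelli2018energy}. You also correctly isolate the real difficulty, namely that in the totally asymmetric regime the symmetric and antisymmetric parts of $L_n$ live at the same scale $n^2$, so the weakly asymmetric Boltzmann--Gibbs machinery of \cite{gonccalves2014nonlinear,gonccalves2015stochastic} does not apply verbatim.

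The gap is in how you propose to close the second-order Boltzmann--Gibbs principle. You suggest rewriting the centred quadratic $\eta_j^2-\text{(affine)}$ as a discrete flux gradient and then invoking spectral-gap estimates for zero-range processes on boxes of size $\varepsilon n$. The problem is that the jump rate here is $g_n(k)=\sqrt{n}\,g(k/\sqrt{n})$, which depends on $n$ and is only approximately linear near the typical densities; a spectral-gap bound uniform in $n$ and in the box density is not available off the shelf and you give no argument for it. Nor is it clear what ``flux'' you have in mind for this $n$-dependent rate. The paper avoids spectral gaps entirely by a change of variables: one works not with $\eta_j$ but with
\[
W_j \;=\; g_n(\eta_j)/g'(0),
\]
for which the invariant measure satisfies the \emph{exact} integration-by-parts identity
\[
E_{\nu^n_\rho}\bigl[f(\eta)(W_j-W_{j+1})\bigr] \;=\; -\,E_{\nu^n_\rho}\bigl[W_j\,\nabla_{j,j+1}f(\eta)\bigr].
\]
This identity, together with Young's inequality, yields the $H^{-1,n}$ bounds directly and feeds into Kipnis--Varadhan without any spectral input. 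The second-order Boltzmann--Gibbs principle is then proved for the quadratic $\overline W_{j-1}\overline W_j$, not for $(\eta_j-\rho)^2$.

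A second point you underestimate is the passage from the raw antisymmetric drift (which involves $g_n(\eta_j)=g'(0)W_j$) to the product $\overline W_{j-1}\overline W_j$. This is the content of the paper's Lemma~\ref{antisymm}: one expands $W_j-\eta_j$ in powers of $n^{-1/2}$, re-expresses the quadratic and cubic pieces in the $W$-variables using identities of the form $\tilde L_n(\eta_j^2-\eta_j)=2(W_{j-1}-W_j)\eta_j+2W_j$, and shows that the centred cubic terms $\overline W_{j-1}\overline W_j\overline W_{j+1}$ are negligible via a three-block estimate (again driven by the same integration-by-parts formula, not by spectral gap). Your remark that ``cubic correctors [are] controlled using $g\in C_b^4$'' is not enough: smoothness of $g$ bounds the size of the Taylor remainders, but it does not by itself kill the $O(n^{-1})$ cubic contributions, which are genuinely dangerous at this scale and require the $W$-variable machinery. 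This change of variables is the key idea imported from the stationary O'Connell--Yor analysis of \cite{jara2020stationary}; without it the argument does not close.
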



\subsection{Relation to \texorpdfstring{$q$}{q}-TASEP}
\label{subsec:qtasep}
As we mentioned before, our zero-range process is related in some way to a $q$-deformed version of totally asymmetric simple exclusion process ($q$-TASEP), which is originally introduced in \cite{borodin2014macdonald}. The dynamics of $q$-TASEP is described as follows. Fix a parameter $q \in [ 0, 1 )$. Let $\mathscr{X}_0 = \{ 0, 1 \}^{ \mathbb{Z} } $ be a configuration space with exclusion constraint where similarly to zero-range process we denote each element in $\mathscr{X}_0 $ by Greek letters like $\xi = \{ \xi_j : j \in \mathbb{Z} \} \in \mathscr{X}_0 $. Here $\xi_j$ denotes the occupation number on a site $j$: at each site at most one particle can exist and there is a particle on the site $j$ if $\xi_j = 1$ while there is no particle on that site if $\xi_j = 0$. Then $q$-TASEP is a process which takes values on $\mathscr{X}_0 $ whose infinitesimal generator is given by 
\[
\mathcal{L}_q f (\xi ) = \sum_{ j \in \mathbb{Z} } (1 - q^{ \xi_j } ) \nabla_{ j , j + 1 } f (\xi)  
\]
for each real valued function $f $ on $\mathscr{X}_0 $. Here $\nabla_{ j , j + 1 } f (\xi) = f (\xi^{ j , j + 1 } ) - f (\xi )  $ and $\xi^{ j , j+ 1 } $ denotes the configuration after a particle on site $j $ jumps to the site $ j + 1 $ if possible, similarly to the zero-range case. Note that $q$-TASEP is a kind of exclusion process, which can also be interpreted as zero-range process by the following way. Indeed, let $X (t) = \{ (\cdots, x_0(t) , x_1(t), \cdots) \in \mathbb{Z}_+^{\mathbb{Z} } : x_j (t) \le x_{j+1} (t), j \in \mathbb{Z} \} $ denotes a family of site positions on which particles exist at time $t$. Note that sites except for $\{ x_j(t) : j \in \mathbb{Z} \} $ are empty and we let $\eta_j (t ) = x_{j} (t) - x_{j-1} (t) $ denotes the number of empty sites between $x_j(t) $ and $x_{j+1} (t) $. Then, after taking $q = q_n = \exp ( - n^{- 1/2 } ) $, the process $\eta^n (t)  = \{ \eta_j (n^{5/2} t) : j \in \mathbb{Z} \}$ ($t \ge 0$) which takes values on $\mathscr{X}$ is a zero-range process with generator $L_n $ with $g (x ) = 1- e^{ - x } $. In this case, product $q$-geometric distribution whose marginal distribution is given by 
\[
\overline{ \nu }^n_\alpha (\eta_j = k ) = ( \alpha/\sqrt{n} ;q_n)_\infty \frac{ (\alpha/\sqrt{n} )^k }{ (q_n ; q_n)_k } , \quad 
k \in \mathbb{Z}_+ 
\]
is invariant for the dynamics. Here $(a ; q )_\infty = \prod_{ k = 0 }^\infty (1 - a q^k ) $ and $ ( a ; q )_n = (a ;q)_\infty /(aq^n;q)_\infty $ are the $q$-Pochhammer symbols. Note that the measure $\nu_\alpha $ is a probability measure by the $q$-binomial theorem 
\[
\sum_{ k = 0 }^\infty x^k \frac{(a; q )_k }{ (q ;q )_k} = 
\frac{(ax; q )_\infty}{(x; q)_\infty}
\]
for all $| x | < 1$ and $| q | < 1 $ with $a = 0 $. Moreover, one can easily check that the function $g$ satisfies Assumptions \ref{ass:regularity} and \ref{ass:density}, and $\overline{\nu}^n_\alpha $ converges in law to product Poisson distribution with parameter $\alpha$ when $n$ tends to infinity, as it is expected. 

\section{Proof outline} 
\label{sec:outline}
In this section, we give an outline to the proof of our main theorem (Theorem \ref{mainthm}). Recall the definition of the fluctuation fields $\{ \mathcal{X}^n_t  : t \in [0,  T ] \} $ defined in \eqref{fluctuation}. We write $\varphi^n_j  = \varphi^n_j (t) = \varphi ( ( j - f_n t ) /  n  ) $ for simplicity and define discrete derivative operators $\nabla^n $ and $\Delta^n $ by 
\begin{equation}
\label{eq:discder}
\nabla^n \varphi^n_j = \frac{n}{2}  (\varphi^n_{ j + 1 } - \varphi^n_{j-1} )  , \quad
\Delta^n \varphi^n_j = n^2  (\varphi^n_{ j + 1 } + \varphi^n_{ j -1 } - 2 \varphi^n_j ) . 
\end{equation}
Moreover, let $L^{ *  }_n $ be the $L^2 (\nu_\rho ) $-adjoint operator of $L_n $, which acts on each $f : \mathscr{X} \to \mathbb{R} $ as   
\[
L^{ * }_n f (\eta ) = n^2 \sum_{ j \in \mathbb{Z} } g_n (\eta_j ) \nabla_{ j , j -1 } f (\eta ) .
\]
Then we define symmetric and anti-symmetric part of the operator $L_n $ by $S_n = (L_n + L^*_n )/ n $ and $A_n = (L_n - L^*_n ) / 2 $, respectively. We begin with a martingale decomposition associated Markov process. By Dynkin's martingale formula, for each test function $\varphi \in \mathcal{S} (\mathbb{R}) $,
\begin{equation}
\label{eq:mart}
\mathcal{M}^n_t (\varphi ) = \mathcal{X}^n_t (\varphi) - \mathcal{X}^n_0 (\varphi) 
- \int_0^t (\partial_s + L_n ) \mathcal{X}^n_s (\varphi) ds 
\end{equation}
is a mean-zero martingale with quadratic variation 
\begin{equation}
\label{qv}
\begin{aligned}
\langle \mathcal{M}^n (\varphi ) \rangle_t 
& =\int_0^t \big( L_n ( \mathcal{X}^n_s (\varphi ) )^2 - 2  \mathcal{X}^n_s (\varphi ) L_n \mathcal{X}^n_s (\varphi ) \big) ds \\
& = \frac{1}{n} \int_0^t \sum_{j \in \mathbb{Z} } g_n (\eta^n_j (s) ) \big[ n ( \varphi^n_{j+1} (s) - \varphi^n_j (s) ) \big]^2 ds ,
\end{aligned}
\end{equation}
which is expected to converge to $g^\prime(0) \rho \| \partial_x \varphi \|^2_{L^2 (\mathbb{R} ) } t $ as $n$ tends to infinity in view of \eqref{eq:Phi}. Next, to decompose the additive functional into symmetric and anti-symmetric parts, we write $L_n = S_n + A_n $ and split the third term of right-hand side of \eqref{eq:mart} into sum of 
\begin{equation}
\label{eq:symm}
\begin{aligned}
\mathcal{S}^n_t (\varphi) 
= \int_0^t S_n \mathcal{X}^n_s (\varphi ) ds  
= \frac{1}{ 2\sqrt{n} } \int_0^t \sum_{ j \in \mathbb{Z} } g_n (\eta^n_j (s) ) \Delta^n \varphi^n_j (s) ds
\end{aligned}
\end{equation}
and
\begin{equation}
\label{eq:anti-symm}
\begin{aligned}
\mathcal{B}^n_t (\varphi)
& = \int_0^t ( \partial_s + A_n ) \mathcal{X}^n_s (\varphi )  ds \\
& = \frac{1}{ \sqrt{n} } \int_0^t \sum_{ j \in \mathbb{Z} } \big[ n g_n (\eta^n_j (s) ) \nabla^n \varphi^n_j (s) - \frac{f_n}{n} (\eta^n_j (s) - \rho ) \partial_x \varphi^n_j  (s) \big] ds  . 
\end{aligned}
\end{equation}
By this line we obtain a decomposition 
\begin{equation}
\label{decomposition}
\mathcal{X}^n_t (\varphi ) =  \mathcal{X}^n_0 (\varphi ) + \mathcal{S}^n_t (\varphi ) + \mathcal{B}^n_t (\varphi ) + \mathcal{M}^n_t (\varphi ) .
\end{equation}
In the sequel, we show tightness of each term in the decomposition \eqref{decomposition} and characterize limiting points. Before consider in detail, let us roughly see the convergence of these terms. For that purpose, a Taylor expansion of $g_n $ in occupation variables 
\begin{equation}
\label{Taylor}
g_n (k) = n^{  1/ 2 } g (n^{ - 1/ 2} k ) 
= g^\prime (0) k 
+ \frac{ 1 }{ 2} g^{\prime \prime } (0 ) k^2 n^{-1/2} 
 + O (n^{ - 1  } ) 
\end{equation}
plays an important role. We begin with the symmetric part. Taking only the leading term of $g_n $, the process $\mathcal{S}^n_t (\varphi ) $ can be replaced by $(g^\prime (0)/2) \mathcal{X}^n_t (\Delta^n \varphi ) $ and thus one can show that this term converges to the viscosity term in the equation \eqref{SBE} tested against each function $\varphi $. Next, for the anti-symmetric part, we see that the continuous derivative can be replaced by the discrete one. Indeed, let
\[
E^n_t (\varphi) 
= \frac{1}{\sqrt{n}} \int_0^t \sum_{j\in \mathbb{Z}}
\frac{f_n}{n} (\eta^n_j (s)-\rho) (\partial_x \varphi^n_j (s) - \nabla^n \varphi^n_j (s)) ds .
\]
Recall the definition of the discrete derivative given in \eqref{eq:discder}. Then the mean-value theorem yields $|\partial_x \varphi^n_j (t) - \nabla^n \varphi^n_j (s) |= O(n^{-2}) $. Hence by the Schwarz inequality, we have that
\[
\mathbb{E}_n \bigg[\sup_{0 \le t \le T} |E^n_t (\varphi)|^2 \bigg]
\le T \frac{f_n^2}{n^3} \int_0^T \sum_{j \in \mathbb{Z}}
\mathbb{E}_n \big[ (\eta^n_j(s)-\rho)^2 \big] 
(\partial_x \varphi^n_j (s)-\nabla^n \varphi^n_j (s))^2 ds 
\le C\frac{T^2}{n^2} . 
\]
Now we expand $ n g_n(\eta_j) - n^{-1}f_n(\eta_j -\rho ) $ in occupation variable $\eta_j $ with the help of the expansion \eqref{Taylor}. Then, choosing the framing $f_n$ carefully, the linear terms cancel and one can expect the leading part has order two. Then $\mathcal{B}^n_t (\varphi ) $ can roughly be written by a quadratic functional of the fluctuation field, which give rise to the non-linear term in the limiting equation \eqref{SBEthm}. This is clarified in the following manner. Let us define random variables $W_j$ and its centered version $\overline{W}_j$ by 
\begin{equation}
\label{eq:wdef}
W_j (t) = g_n (\eta^n_j (t ) ) / g^\prime (0) , \quad 
\overline{W }_j (t)= W_j (t) - \Phi_n (\rho ) / g^\prime (0) .
\end{equation}
Moreover, we define a modified process $\tilde{\mathcal{B} }^n_\cdot \in D ( [0, T ], \mathcal{S}^\prime (\mathbb{R} ) ) $ by
\begin{equation}
\label{modified}
\tilde{ \mathcal{B} }^n_t ( \varphi ) = \frac{ g^{ \prime \prime } (0) }{ 2 } \int_0^t \sum_{ j \in \mathbb{Z} } \overline{W}_{ j -1 } (s) \overline{W}_j (s) \nabla^n \varphi^n_j (s) ds  
\end{equation}
for each test function $\varphi \in \mathcal{S} (\mathbb{R}) $. Then we have the following result. 

\begin{lemma}
\label{antisymm}
We have that 
\[
\limsup_{ n \to \infty } \mathbb{E}_n \bigg[  \sup_{ 0 \le t \le T } 
\big| \mathcal{B}^n_t (\varphi ) - \tilde{ \mathcal{B} }^n_t (\varphi ) \big|^2 \bigg] = 0 .
\]
\end{lemma}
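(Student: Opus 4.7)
The plan is to expand $\mathcal{B}^n_t(\varphi) - \tilde{\mathcal{B}}^n_t(\varphi)$ by Taylor-expanding $g_n$ around zero, identify cancellations arising from the specific choice of $f_n$, and show the residual terms vanish in $L^2$ uniformly in $t \in [0,T]$. First, as already argued in the outline, I would replace $\partial_x\varphi^n_j$ by $\nabla^n\varphi^n_j$ inside $\mathcal{B}^n_t$; the error $E^n_t$ satisfies $\mathbb{E}_n[\sup_{t \le T} |E^n_t|^2] = O(T^2/n^2)$ and is negligible. With the discrete derivative in common, the integrand of $\mathcal{B}^n_t - \tilde{\mathcal{B}}^n_t$ reduces to
\[
\left\{ \frac{1}{\sqrt{n}}\left[ n g_n(\eta^n_j(s)) - \frac{f_n}{n}(\eta^n_j(s) - \rho) \right] - \frac{g''(0)}{2} \overline{W}_{j-1}(s) \overline{W}_j(s) \right\} \nabla^n \varphi^n_j(s) .
\]

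Next, using \eqref{Taylor} through third order (valid since $g \in C^4_b$), I would expand $n g_n(\eta_j) - n^{-1} f_n (\eta_j - \rho)$ as a polynomial in $\eta_j$ and substitute $\eta_j = (\eta_j - \rho) + \rho$ to isolate powers of $(\eta_j - \rho)$. The specific form of $f_n = b_2 n^2 + b_1 n^{3/2} + b_0 n$ in \eqref{eq:framing} is engineered so that after a first-order Boltzmann-Gibbs linearization of $\eta_j^2$ and $\eta_j^3$, i.e., replacing $\eta_j^k - E_{\nu_\rho^n}[\eta_j^k]$ by its first-chaos projection $\partial_\rho E_{\nu_\rho^n}[\eta_j^k] \cdot (\eta_j - \rho)$, the linear-in-$(\eta_j - \rho)$ contributions cancel order by order: $b_2 = g'(0)$ kills the $n$-order piece, $b_1$ the $\sqrt{n}$-order piece, $b_0$ the $O(1)$ piece. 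Constants that appear vanish after summation against $\sum_j \nabla^n\varphi^n_j = 0$ by telescoping of the discrete derivative.

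What survives is the second-chaos residue at leading order, essentially $\frac{g''(0)}{2}\bigl[(\eta_j^2 - E_{\nu_\rho^n}[\eta_j^2]) - \partial_\rho E_{\nu_\rho^n}[\eta_j^2] \cdot (\eta_j - \rho)\bigr]$ tested against $\nabla^n \varphi^n_j$ and integrated in time. I would then invoke a second-order Boltzmann-Gibbs estimate to replace this single-site quadratic functional by the two-site bond product $\frac{g''(0)}{2}\overline{W}_{j-1}\overline{W}_j$, which is precisely the integrand of $\tilde{\mathcal{B}}^n_t$. Higher-order Taylor remainders of order $n^{-1/2}\eta_j^4$ and lower-order residual chaos terms are controlled by moment bounds for $\eta_j$ under $\nu_\rho^n$; Assumption \ref{ass:density} guarantees such moment bounds uniformly in $n$.

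The hard part will be the second-order Boltzmann-Gibbs step needed to pass from the single-site quadratic functional to the bond product $\overline{W}_{j-1}\overline{W}_j$. In the totally asymmetric regime the symmetric part of the generator scales only as $n$ rather than $n^2$, so the Kipnis-Varadhan $H^{-1}$ approach used in the weakly asymmetric setting of \cite{gonccalves2015stochastic} loses a factor of $n$ and cannot be applied directly. The resolution is likely either a direct second-moment computation exploiting the product structure of $\nu_\rho^n$ (under which distinct $\overline{W}_j$'s are independent mean-zero variables and the covariance of the time-integrated bond functional telescopes), or a block-averaging argument with block size $\ell = \ell(n) \to \infty$ tuned to balance the replacement error against the dynamic mixing rate.
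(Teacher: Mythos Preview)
Your strategy differs substantially from the paper's, and the step you flag as hard is handled by a mechanism you do not mention.

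The paper never performs a single-site-to-bond Boltzmann--Gibbs replacement of the type $(\eta_j-\rho)^2 - \sigma^2 \rightsquigarrow \overline W_{j-1}\overline W_j$. Instead it works throughout in the $W$-variables and exploits the fact that any local function in the \emph{range} of the full generator $L_n$ is negligible after time-integration (Lemma~\ref{lem:range}, proved via Kipnis--Varadhan). Explicit identities such as
\[
\tilde L_n(\eta_j^2-\eta_j) = 2(W_{j-1}-W_j)\eta_j + 2W_j, \qquad \tilde L_n = (n^2 g'(0))^{-1}L_n,
\]
then allow replacing $W_j\eta_j$ by $W_{j-1}\eta_j + W_j$ at no cost, and a further Taylor step turns $W_{j-1}\eta_j$ into $W_{j-1}W_j$. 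The bond product appears \emph{algebraically}, with no block-averaging, and the constants $b_1,b_0$ in \eqref{eq:framing} are read off from the resulting expansion \eqref{expansion}. Your concern that ``the symmetric part of the generator scales only as $n$ rather than $n^2$'' is misplaced here: the Dirichlet form $\|\cdot\|_{1,n}$ carries the full $n^2$ prefactor, which is precisely what makes Lemma~\ref{lem:range} work.

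There is also a genuine gap in your treatment of the cubic terms. The order-$n^{-1/2}$ residue coming from $g^{(3)}(0)\eta_j^3$ is \emph{not} controlled by moment bounds: a crude Schwarz estimate on $\int_0^t \sum_j n^{-1/2}[\text{centered local cubic}]\,\nabla^n\varphi^n_j\,ds$ yields only $O(T^2)$, not $o(1)$. The paper handles these by further generator identities (the list involving $\tilde L_n\eta_j^3$, $\tilde L_n(\eta_{j-1}\eta_j\eta_{j+1})$, etc.) to convert everything to triple bond products with distinct indices, and then proves a separate cubic replacement estimate (Lemma~\ref{lem:order3}) showing that $\overline W_{j-1}\overline W_j\overline W_{j+1}$ vanishes with rate $T^{3/2}/n$.
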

Lemma \ref{antisymm} is a consequence of an expansion of $W_j - \eta_j $ (see \eqref{expansion}), which can be deduced by a similar way as for the O'Connell-Yor polymer model given in \cite{jara2020stationary}. The proof of Lemma \ref{antisymm} is postponed to Section \ref{sec:expansion}.

\section{The second-order Boltzmann-Gibbs principle}
\label{sec:BG2}
For each $\ell \in \mathbb{N}$ we denote a centered local average of $W_j $'s by $\overrightarrow{W}^\ell_j = \ell^{ - 1 } \sum_{ k = 0, \ldots, \ell -1  } \overline{W}_{ j + k } $. In addition, let $\tau_j $ denotes the canonical shift: $\tau_j \eta_i = \eta_{ i + j } $. We define 
\begin{equation}
\label{Q}
\mathcal{Q}^n_\rho (\ell ; t ) =  \frac{ g^{ \prime \prime } (0) }{ 2 } \bigg( \big( \overrightarrow{W}^\ell_0 (t) \big)^2 - \frac{ \sigma^2_n (\rho ) }{ \ell } \bigg) 
\end{equation}
where $\sigma_n^2 (\rho) = \mathrm{Var}_{\nu^n_\rho}[W_0]$. Then the following result is central to demonstrate the main theorem. 

\begin{theorem}[Second-order Boltzmann-Gibbs principle]
\label{BG2}
We have that 
\[
\begin{aligned}
\mathbb{E}_n \bigg[  \sup_{ 0 \le t \le T } \bigg| \tilde{ \mathcal{B} }^n_t (\varphi ) - \int_0^t \sum_{ j \in \mathbb{Z} } \tau_j \mathcal{Q}^n_\rho (\ell; s ) \nabla^n \varphi^n_j (s) ds \bigg|^2 \bigg] 
\le C \bigg( \frac{ \ell }{ n^2 } + \frac{ T }{ \ell^{  2 } } \bigg) \int_0^T \sum_{ j \in \mathbb{Z} } (\nabla^n \varphi^n_j ( t ) )^2 dt .   
\end{aligned}
\]
\end{theorem}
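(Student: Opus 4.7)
The plan is to combine a Kipnis--Varadhan type inequality for additive functionals of the stationary process $\eta^n$ with a dyadic multi-scale replacement of local averages. Setting
\[
D_j(s) = \overline{W}_{j-1}(s)\overline{W}_j(s) - \Big[\big(\overrightarrow{W}^\ell_j(s)\big)^2 - \frac{\sigma^2_n(\rho)}{\ell}\Big],
\]
the quantity inside the supremum equals $\tfrac{g''(0)}{2}\int_0^t \sum_{j\in\mathbb{Z}} D_j(s)\,\nabla^n\varphi^n_j(s)\,ds$. Applying the Kipnis--Varadhan bound, with the symmetric part $S_n$ providing the Dirichlet form under the invariant measure $\nu^n_\rho$, I would reduce the $\sup_t L^2$-norm of this additive functional to estimating an $H^{-1,n}$-norm of the spatial sum $\sum_j D_j\,\nabla^n\varphi^n_j$. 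The remaining task is to control this dual norm.

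\textbf{Dyadic renormalization.} Since $\nu^n_\rho$ is a product measure, the $\{\overline{W}_j\}$ are i.i.d.\ with mean zero and variance $\sigma^2_n(\rho)$. I would telescope along dyadic scales $2^1,2^2,\dots,2^K$ with $2^K$ comparable to $\ell$, at each step passing from a product of centred averages at scale $2^{k-1}$ to the centred square at scale $2^k$. The algebraic engine is the identity
\[
\big(\overrightarrow{W}^{2m}_j\big)^2 = \tfrac{1}{4}\big(\overrightarrow{W}^m_j\big)^2 + \tfrac{1}{4}\big(\overrightarrow{W}^m_{j+m}\big)^2 + \tfrac{1}{2}\overrightarrow{W}^m_j\,\overrightarrow{W}^m_{j+m},
\]
together with $\mathrm{Var}(\overrightarrow{W}^m_j) = \sigma^2_n/m$, so that after absorbing diagonal contributions the centering $\sigma^2_n/(2m)$ at the next scale emerges naturally from $\sigma^2_n/m$ at the current one. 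The base step replaces $\overline{W}_{j-1}\overline{W}_j$ by the scale-$2$ product, and the procedure terminates at scale $\ell$.

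\textbf{Cost of each dyadic step.} For a local function $f$ supported on $2^k$ consecutive sites with $E_{\nu^n_\rho}[f]=0$, a spectral-gap/equivalence-of-ensembles argument for the zero-range dynamics restricted to a window of size $2^k$ yields $\|f\|^2_{-1,n}\le C (2^k)^2 n^{-2}\,E_{\nu^n_\rho}[f^2]$; since $E_{\nu^n_\rho}[f^2]=O(1/2^k)$ for the replacement at scale $2^k$, combined with Cauchy--Schwarz on the spatial sum weighted by $\nabla^n\varphi^n_j$ the step contributes at most $C\, 2^k n^{-2}\int_0^T\sum_j(\nabla^n\varphi^n_j)^2\,dt$. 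Summing the geometric series up to $2^K\sim \ell$ produces the $C\ell/n^2$ term. For the terminal comparison at scale $\ell$, I would bypass $H^{-1,n}$ entirely: a direct $L^2$-Minkowski estimate uses only stationarity and the fact that $\mathrm{Var}\big((\overrightarrow{W}^\ell_j)^2 - \sigma^2_n/\ell\big) = O(\ell^{-2})$ (by the fourth-moment computation for sums of i.i.d.\ centred variables), yielding the $CT/\ell^2$ contribution.

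\textbf{Main obstacle.} The chief difficulty is that in the totally asymmetric regime $S_n$ is of order unity rather than of order $n^2$ as in the weakly asymmetric setting, so off-the-shelf $H^{-1,n}$ estimates are far too crude. One is therefore forced to invoke a Kipnis--Varadhan variant tailored to this regime in which the discrete gradient $\nabla^n\varphi^n_j$ (as opposed to $\varphi^n_j$) is what supplies the extra smoothing that renders the spectral-gap bound effective. A related bookkeeping point is that $\varphi^n_j(s)=\varphi((j-f_n s)/n)$ drifts macroscopically with $s$, so one must verify that the dyadic block decomposition commutes with this shift and that the mean-zero property of each replacement is preserved uniformly in $s\in[0,T]$; this is routine but essential for a clean bound of the stated form.
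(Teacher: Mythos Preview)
Your proposal follows the standard Gon\c{c}alves--Jara dyadic multi-scale route, while the paper takes a structurally different and shorter path. Two points deserve comment.

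\textbf{The ``main obstacle'' is not there.} You claim that in the totally asymmetric regime $S_n$ is of order unity rather than $n^2$. This is false: $L_n$ carries the prefactor $n^2$, and so does $L_n^*$; the Dirichlet form $\|F\|^2_{1,n}=\langle F,-S_nF\rangle$ is explicitly $\tfrac{n^2}{2}\sum_{j,|j-j'|=1}E_{\nu^n_\rho}[g_n(\eta_j)(\nabla_{j,j'}F)^2]$. The Kipnis--Varadhan inequality therefore delivers the same $n^{-2}$ gain as in the weakly asymmetric case, and no special variant is needed. This misunderstanding undermines the motivation for the entire ``obstacle'' paragraph.

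\textbf{Different mechanism for the $H^{-1,n}$ bound.} The paper does \emph{not} use a dyadic telescoping or any spectral-gap/equivalence-of-ensembles input. Instead it exploits the fact that the functional is written in the variables $W_j=g_n(\eta_j)/g'(0)$, for which the exact integration-by-parts identity
\[
E_{\nu^n_\rho}\big[f(\eta)(W_j-W_{j+1})\big]=-E_{\nu^n_\rho}\big[(\nabla_{j,j+1}f)\,W_j\big]
\]
holds. With this identity the replacement is done in a \emph{single step} from scale $1$ to scale $\ell$: one writes $\overline{W}_j-\overrightarrow{W}^\ell_j=\sum_{i=0}^{\ell-2}\psi_{i+1}(W_{j+i}-W_{j+i+1})$, applies the identity once, uses Young's inequality to split off $\|f\|^2_{1,n}$, and reads off the $\ell/n^2$ bound directly (Lemma~4.2). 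The second piece $\overrightarrow{W}^\ell_j(\overline{W}_{j-1}-\overrightarrow{W}^\ell_j)+\sigma^2_n(\rho)/\ell$ is handled the same way, with a residual term estimated by a crude $L^2$ bound that produces the $T/\ell^2$ contribution (Lemma~4.3). No geometric sum over scales appears.

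Your spectral-gap route could in principle be made to work, but it is heavier and requires an ingredient the paper never assumes: a uniform-in-$n$ spectral gap of order $\ell^{-2}$ for the zero-range process with rate $g_n$ on boxes of size $\ell$. Under Assumption~2.1 alone (strict monotonicity, $g'(0)>0$, $g\in C^4_b$) there is no uniform positive lower bound on $g_n(k+1)-g_n(k)\approx g'(k/\sqrt{n})$, so the Landim--Sethuraman--Varadhan type bound you implicitly invoke is not available without further hypotheses. The integration-by-parts route sidesteps this entirely because it never compares $f$ to its conditional expectation on a box; it only uses the product structure of $\nu^n_\rho$ and the algebraic relation between $g_n$ and the measure.
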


\subsection{Preliminaries}
To give a proof Theorem \ref{BG2}, we prepare for some basic tools. For each local $L^2 (\nu^n_\rho ) $ function $F$, we define its $H^{1, n } $-norm by $\| F \|^2_{ 1 , n } = \langle F , - S_n F \rangle_{ L^2 (\nu^n_\rho ) } $, which is explicitly represented as
\[
\| F \|^2_{1 , n }
= \frac{ n^2}{2} \sum_{ j \in \mathbb{Z}, \, | j - j^\prime | = 1 } E_{ \nu^n_\rho } [g_n (\eta_j ) (\nabla_{ j ,j^\prime } F (\eta))^2 ] . 
\]
Moreover we define the $H^{-1 , n } $-norm through the variational formula 
\[
\begin{aligned}
 \| F \|^2_{- 1, n } 
  = \sup_{ f \in L^2 (\nu^n_\rho ), \, \text{local} } \big\{ 2 \langle F , f \rangle_{ L^2 (\nu^n_\rho ) } - \| f \|^2_{1, n } \big\} .
\end{aligned}
\]
Then the following Kipnis-Varadhan inequality holds true. 

\begin{proposition}[Kipnis-Varadhan inequality]
\label{KV}
Let $F : [0, T ] \to L^2 (\nu_\rho ) $ be a function such that $E_{\nu_\rho} [F(t,\cdot ) ] = 0 $ for each $t \in [0, T]$. Then there exists a positive constant $C$ such that 
\[
\mathbb{E}_n \bigg[ \sup_{ 0 \le t \le T } \bigg| \int_0^t F (s, \eta^n (s) ) ds \bigg|^2 \bigg]
\le C \int_0^T \| F(t, \cdot ) \|^2_{ - 1, n } dt . 
\]
\end{proposition}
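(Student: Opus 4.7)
The approach is the classical resolvent method of Kipnis-Varadhan, applied directly to the time-dependent $F$ and adapted to the non-reversibility of $L_n$. For each $\lambda > 0$, define the resolvent pointwise in $s$ by $u_\lambda(s,\cdot) = (\lambda - L_n)^{-1} F(s,\cdot)$, which is well defined because $E_{\nu^n_\rho}[F(s,\cdot)] = 0$. Writing $F(s,\cdot) = \lambda u_\lambda(s,\cdot) - L_n u_\lambda(s,\cdot)$ and invoking Dynkin's formula for the time-dependent function $(s,\eta) \mapsto u_\lambda(s,\eta)$ yields the decomposition
\[
\int_0^t F(s,\eta^n(s))ds = M^\lambda_t + \lambda\int_0^t u_\lambda(s,\eta^n(s))ds - \bigl[u_\lambda(t,\eta^n(t)) - u_\lambda(0,\eta^n(0))\bigr] + \int_0^t \partial_s u_\lambda(s,\eta^n(s))ds,
\]
where $M^\lambda_t$ is a mean-zero martingale with predictable quadratic variation $\int_0^t \Gamma(u_\lambda(s,\cdot))(\eta^n(s))ds$ and $\Gamma(u) := L_n u^2 - 2u L_n u$ denotes the carr\'e du champ. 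The last term is handled by approximating $F$ by step functions in $s$, for which it either vanishes on each sub-interval or produces only lower-order contributions to the final bound, and then passing to the limit.

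The core resolvent estimate is obtained by pairing $(\lambda - L_n) u_\lambda(s,\cdot) = F(s,\cdot)$ against $u_\lambda(s,\cdot)$ in $L^2(\nu^n_\rho)$: since $\langle u_\lambda, A_n u_\lambda \rangle_{L^2(\nu^n_\rho)} = 0$ the inner product reduces to the symmetric part, and applying the variational formula defining $\|\cdot\|_{-1,n}$ yields
\[
2\lambda \|u_\lambda(s,\cdot)\|_{L^2(\nu^n_\rho)}^2 + \|u_\lambda(s,\cdot)\|_{1,n}^2 \le \|F(s,\cdot)\|_{-1,n}^2.
\]
Moreover, stationarity of $\nu^n_\rho$ gives $\mathbb{E}_n[\Gamma(u_\lambda(s,\cdot))(\eta^n(s))] = 2\|u_\lambda(s,\cdot)\|_{1,n}^2$. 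Consequently, Doob's maximal inequality controls the martingale contribution by $C \int_0^T \|u_\lambda(s,\cdot)\|_{1,n}^2 ds \le C\int_0^T \|F(s,\cdot)\|_{-1,n}^2 ds$, and Cauchy-Schwarz together with stationarity controls the drift contribution by $\lambda^2 T \int_0^T \|u_\lambda(s,\cdot)\|_{L^2(\nu^n_\rho)}^2 ds \le (\lambda T/2) \int_0^T \|F(s,\cdot)\|_{-1,n}^2 ds$; the choice $\lambda = 1/T$ makes the latter of the desired order.

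The main obstacle is the boundary term $\sup_t |u_\lambda(t,\eta^n(t)) - u_\lambda(0,\eta^n(0))|^2$: stationarity controls only the pointwise-in-$t$ $L^2(\nu^n_\rho)$ norm of $u_\lambda(t,\eta^n(t))$, not its supremum, and the non-reversibility of $L_n$ precludes the classical spectral argument available for symmetric generators. I would overcome this via the time-reversal device: under $\nu^n_\rho$ the reversed process has generator $L_n^*$, so applying the analogous decomposition using $v_\lambda(s,\cdot) = (\lambda - L_n^*)^{-1} F(s,\cdot)$ and combining the forward and reversed estimates symmetrizes the role of the boundary contribution, allowing one to absorb it into the other terms; this is the standard non-reversible Kipnis-Varadhan mechanism of Komorowski-Landim-Olla. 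Assembling all four pieces of the decomposition at $\lambda = 1/T$ produces the desired bound.
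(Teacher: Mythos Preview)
The paper does not actually prove this proposition; it is stated in Section~4.1 as a preliminary tool without proof, alongside the integration-by-parts lemma, and is used as a black box throughout. So there is no paper proof to compare against.

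Your sketch is the standard resolvent argument of Kipnis--Varadhan, extended to the non-reversible setting via the forward--backward martingale decomposition as in Komorowski--Landim--Olla, and is essentially correct. The resolvent estimate $2\lambda\|u_\lambda\|_{L^2}^2 + \|u_\lambda\|_{1,n}^2 \le \|F\|_{-1,n}^2$ follows exactly as you say from antisymmetry of $A_n$ and the variational definition of $\|\cdot\|_{-1,n}$; the carr\'e du champ identity $\mathbb{E}_n[\Gamma(u_\lambda)] = 2\|u_\lambda\|_{1,n}^2$ uses invariance of $\nu^n_\rho$; Doob's inequality and the choice $\lambda = 1/T$ are the right moves. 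The step-function approximation to dispose of the $\partial_s u_\lambda$ term and the time-reversal device to control the boundary term are precisely the standard manoeuvres one finds in the references the paper implicitly relies on. In short, you have reproduced the proof the paper omits.
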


Moreover, the following integration-by-parts formula is a key ingredient.

\begin{lemma}[Integration by parts]
\label{IBP}
Let $f $ be any real-valued local $L^2 (\nu^n_\rho ) $-function on $\mathscr{X} $. Then for each $j \in \mathbb{Z} $ we have an identity 
\[
E_{ \nu^n_\rho } [ f ( \eta ) (  W_j  - W_{ j + 1 }  ) ] = - E_{ \nu^n_\rho } [\nabla_{ j , j + 1 } f (\eta ) W_j ] .
\]
\end{lemma}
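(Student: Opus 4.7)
The plan is to reduce everything to the explicit product structure of $\nu^n_\rho$ and exploit the elementary shift identity
\[
g_n(k)\, \nu^n_\rho(\eta_j = k) = \Phi_n(\rho)\, \nu^n_\rho(\eta_j = k - 1), \qquad k \geq 1,
\]
which is immediate from the formula $\nu^n_\rho(\eta_j = k) \propto \Phi_n(\rho)^k / g_n!(k)$ and the definition $g_n!(k) = g_n(k)\, g_n!(k-1)$. Since $W_j = g_n(\eta_j)/g'(0)$, the conclusion rearranges to
\[
E_{\nu^n_\rho}\big[(f(\eta^{j,j+1}) - f(\eta))\, g_n(\eta_j)\big] = E_{\nu^n_\rho}\big[f(\eta)\, (g_n(\eta_{j+1}) - g_n(\eta_j))\big],
\]
so after cancelling the common term $E_{\nu^n_\rho}[f(\eta) g_n(\eta_j)]$ on both sides it suffices to establish
\[
E_{\nu^n_\rho}\big[f(\eta^{j,j+1})\, g_n(\eta_j)\big] = E_{\nu^n_\rho}\big[f(\eta)\, g_n(\eta_{j+1})\big].
\]

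To verify this, I will condition on $(\eta_j,\eta_{j+1})$ and integrate out all other coordinates, reducing the claim to a sum over two indices. Writing $p(k)$ for the common marginal and $\tilde\eta$ for the remaining coordinates, the left-hand side becomes
\[
\sum_{k \geq 1,\, m \geq 0} E_{\nu^n_\rho}\big[f(\tilde\eta, k-1, m+1)\big]\, g_n(k)\, p(k)\, p(m).
\]
The shift identity rewrites $g_n(k)\, p(k) = \Phi_n(\rho)\, p(k-1)$; after the change of variables $k' = k-1$, $m' = m+1$, a second application of the shift identity in the reverse direction gives $\Phi_n(\rho)\, p(m' - 1) = g_n(m')\, p(m')$, and since $g_n(0) = 0$ the summation range $m' \geq 1$ can be extended to $m' \geq 0$. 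Reassembling, this equals $E_{\nu^n_\rho}[f(\eta)\, g_n(\eta_{j+1})]$, which is exactly what is needed.

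The argument is routine and I do not foresee any genuine obstacle: the only bookkeeping concerns tracking which summation ranges are active (with the $\eta_j = 0$ and $\eta_{j+1} = 0$ contributions both killed by $g_n(0) = 0$) and a trivial Fubini across the infinite product, which is justified by locality of $f$ together with $f \in L^2(\nu^n_\rho)$.
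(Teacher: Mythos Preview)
Your proof is correct and follows essentially the same route as the paper. Both arguments reduce to the key identity $E_{\nu^n_\rho}[f(\eta^{j,j+1})\,g_n(\eta_j)] = E_{\nu^n_\rho}[f(\eta)\,g_n(\eta_{j+1})]$ and then subtract the common term; the paper packages the two marginal shifts into the single ratio $\nu^n_\rho(\eta^{j,j+1}) = \dfrac{g_n(\eta_j)}{g_n(\eta_{j+1}+1)}\,\nu^n_\rho(\eta)$, while you apply the one-site shift identity $g_n(k)\,p(k) = \Phi_n(\rho)\,p(k-1)$ twice, which is mathematically the same computation.
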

\begin{proof}
First we observe the invariant measure $\nu^n_\rho $ satisfies an identity 
\[
\nu^n_\rho (\eta^{ j , j + 1 } ) = \frac{  g_n ( \eta_j ) }{ g_n (\eta_{ j + 1}  + 1 )  } \nu^n_\rho (\eta) 
\]
for each $j \in \mathbb{Z} $ and each configuration $\eta \in \mathscr{X} $ such that $\eta_j > 0 $. From this identity, for any local function $f$ we see that a change of variables yields  
\[
\begin{aligned}
E_{ \nu^n_\rho } [ f (\eta) g_n (\eta_{ j + 1 }) ] 
 = \sum_{\eta \in \mathscr{X} } \sum_{ j \in \mathbb{Z} }  f (\eta^{ j , j  + 1 } ) g_n (\eta_{ j +  1 } + 1  ) \frac{ g_n (\eta_j ) }{ g_n (\eta_{j + 1}  + 1 )  } \nu^n_\rho (\eta) = E_{ \nu^n_\rho} [ f (\eta^{ j , j + 1 } ) g_n (\eta_j ) ] . 
\end{aligned}
\]
Now subtract $E_{ \nu^n_\rho } [ f (\eta) g_n (\eta_j) ] $ from both sides to complete the proof. 
\end{proof}

\subsection{Proof of Theorem \ref{BG2}} 
To show Theorem \ref{BG2}, we use a decomposition  
\[
\begin{aligned}
\frac{g^{\prime \prime}(0)}{2} \overline{ W }_{j - 1 } \overline{ W }_j  - \tau_j \mathcal{Q}^n_\rho (\ell )  
 = \frac{ g^{ \prime \prime } (0) }{ 2 } \bigg( \overline{ W }_{ j - 1 } ( \overline{ W}_j  - \overrightarrow{W}^\ell_j  ) 
 + \overrightarrow{ W }^\ell_j ( \overline{W}_{ j  - 1 }  - \overrightarrow{W}^\ell_j  ) + \frac{ \sigma^2_n ( \rho ) }{ \ell } \bigg) .
\end{aligned}
\]
Then following Lemmas \ref{oneblock} and \ref{BGmain} finish the proof of the second-order Boltzmann-Gibbs principle.

\begin{lemma}
\label{oneblock}
There exists a positive constant $C$ such that  
\[
\begin{aligned}
 \mathbb{E}_n \bigg[ \sup_{ 0 \le t \le T } \bigg| \int_0^t \sum_{ j \in \mathbb{Z} } \overline{ W }_{ j -1 } ( s ) ( \overline{W}_j ( s ) - \overrightarrow{W}^\ell_j ( s ) ) \varphi_j (s) ds \bigg|^2 \bigg] 
 \le C \frac{ \ell }{ n^2   }  \int_0^T \sum_{ j \in \mathbb{Z} } ( \varphi_j (t ) )^2 dt .
\end{aligned}
\]
\end{lemma}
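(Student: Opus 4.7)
The plan is to apply the Kipnis--Varadhan inequality (Proposition \ref{KV}) to the integrand
\[
F(s,\eta) := \sum_{j\in\mathbb{Z}}\overline{W}_{j-1}(\eta)\bigl(\overline{W}_j(\eta) - \overrightarrow{W}_j^\ell(\eta)\bigr)\varphi_j(s),
\]
which has mean zero under $\nu^n_\rho$: the $\overline{W}_i$'s are centered and independent across sites, and the sites $j,\ldots,j+\ell-1$ entering $\overrightarrow{W}_j^\ell$ are all distinct from $j-1$. It therefore suffices to establish the pointwise bound $\|F(s,\cdot)\|^2_{-1,n} \le C\ell n^{-2}\sum_{j}\varphi_j(s)^2$ and integrate in time.

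To estimate the $H^{-1,n}$-norm via its variational formula, I first telescope
\[
\overline{W}_j - \overrightarrow{W}_j^\ell
= \frac{1}{\ell}\sum_{i=0}^{\ell-2}(\ell-1-i)\bigl(W_{j+i}-W_{j+i+1}\bigr).
\]
For any local $f$, the crucial observation is that $\overline{W}_{j-1}$ lives on the single site $j-1$, which is disjoint from the bond $\{j+i,j+i+1\}$ as soon as $i\ge 0$. Hence Lemma \ref{IBP} applied to the function $f\overline{W}_{j-1}$ yields
\[
E_{\nu^n_\rho}\bigl[f\overline{W}_{j-1}(W_{j+i}-W_{j+i+1})\bigr]
= -E_{\nu^n_\rho}\bigl[\overline{W}_{j-1}\,W_{j+i}\,\nabla_{j+i,j+i+1}f\bigr].
\]

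Reindexing by the bond $(k,k+1)=(j+i,j+i+1)$ and collecting configuration factors into
\[
\Psi_k := \sum_{i=0}^{\ell-2}\frac{\ell-1-i}{\ell}\,\varphi_{k-i}\,\overline{W}_{k-i-1},
\]
one rewrites $\langle F,f\rangle_{\nu^n_\rho} = -g'(0)^{-1}\sum_{k} E_{\nu^n_\rho}[\Psi_k\, g_n(\eta_k)\,\nabla_{k,k+1}f]$. Splitting $g_n(\eta_k)=\sqrt{g_n(\eta_k)}\cdot\sqrt{g_n(\eta_k)}$ and applying Cauchy--Schwarz pointwise, then a second Cauchy--Schwarz over $k$ and a Young inequality, gives
\[
2\langle F,f\rangle \le \|f\|^2_{1,n} + \frac{C}{n^2}\sum_{k\in\mathbb{Z}}E_{\nu^n_\rho}\bigl[\Psi_k^2\, g_n(\eta_k)\bigr],
\]
so that $\|F\|^2_{-1,n}\le (C/n^2)\sum_k E_{\nu^n_\rho}[\Psi_k^2 g_n(\eta_k)]$.

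It remains to estimate this last sum. Since $\Psi_k$ involves only sites $k-\ell+1,\ldots,k-1$, all disjoint from $k$, product independence gives $E_{\nu^n_\rho}[\Psi_k^2 g_n(\eta_k)] = E_{\nu^n_\rho}[\Psi_k^2]\,\Phi_n(\rho)$; centering and independence of the $\overline{W}_i$'s annihilate the off-diagonal terms in $\Psi_k^2$, leaving
\[
E_{\nu^n_\rho}[\Psi_k^2] = \sigma_n^2(\rho)\sum_{i=0}^{\ell-2}\frac{(\ell-1-i)^2}{\ell^2}\varphi_{k-i}^2.
\]
Summing in $k$ and using $\sum_{i=0}^{\ell-2}(\ell-1-i)^2/\ell^2 \le C\ell$ produces the $\ell$ factor, completing the bound. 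The main subtlety I anticipate is the Cauchy--Schwarz weighting by $\sqrt{g_n(\eta_k)}$: to obtain the sharp $\ell/n^2$ rate one needs $\Psi_k$ to be independent of $g_n(\eta_k)$, which forces the grouping by the bond over which the gradient acts and uses in a decisive way that $\overline{W}_{j-1}$ sits to the \emph{left} of that bond.
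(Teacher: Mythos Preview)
Your proof is correct and follows essentially the same route as the paper: the same telescoping of $\overline{W}_j-\overrightarrow{W}^\ell_j$ into nearest-neighbour increments, the same regrouping by the bond $(k,k+1)$ into a function $\Psi_k$ (the paper's $F_k$) supported on sites strictly to the left of $k$, the same application of Lemma~\ref{IBP} followed by Young's inequality with the $\sqrt{g_n(\eta_k)}$ weight, and the same independence argument to bound $\sum_k E[\Psi_k^2]\le C\ell\sum_j\varphi_j^2$ before invoking Kipnis--Varadhan. Your write-up is in fact slightly more explicit in justifying the factorisation $E[\Psi_k^2 g_n(\eta_k)]=\Phi_n(\rho)E[\Psi_k^2]$ and in computing $E[\Psi_k^2]$ via the diagonal terms.
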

\begin{proof}
Recalling the definition of the local average, we first observe 
\[
\begin{aligned}
\sum_{  j \in \mathbb{Z} } \overline{ W }_{ j -1 } ( \overline{W}_j - \overrightarrow{W}^\ell_j  ) \varphi_j 
& = \sum_{ j \in \mathbb{Z} } \overline{ W }_{ j -1 } \sum_{ i = 0 }^{ \ell -2  } ( W_{ j + i } - W_{ j + i  +1 } ) \psi_{ i + 1} \varphi_j  \\
& = \sum_{ k \in \mathbb{Z} } F_k ( W_k - W_{k + 1} ) 
\end{aligned}
\]
where $\psi_i = ( \ell - i ) / \ell $ and $F_k = \sum_{ i = 0 , \ldots, \ell - 2 } \overline{ W }_{ k - i - 1 } \psi_{i + 1 } \varphi_{ k - i } $, and in the second identity we let $k = j + i $ to rearrange the sum. Here we note that the local function $F_j $ is invariant under the action $\sigma_{ j , j + 1 } $. We fix any local, $L^2 (\nu^n_\rho ) $-function $f $. Then, according to the integration by parts formula (Lemma \ref{IBP}), we have   
\[
\begin{aligned}
 2 \bigg\langle \sum_{ j \in \mathbb{Z} } \overline{ W }_{ j - 1 } ( \overline{W}_j  - \overrightarrow{W}^\ell_j  ) \varphi_j , f \bigg\rangle_{ L^2 (\nu^n_\rho ) } 
&= 2 \sum_{ j \in \mathbb{Z} } E_{ \nu^n_\rho } [ F_j ( W_j  - W_{ j + 1 } )  f  ] \\
& = - 2 \sum_{ j \in \mathbb{Z} } E_{ \nu^n_\rho } [ W_j (\nabla_{ j , j + 1 } f ) F_j  ] . 
\end{aligned}
\]  
By Young's inequality, we notice that the last display can be absolutely bounded by 
\[
\frac{n^2}{2}  \sum_{ j \in \mathbb{Z} } E_{\nu^n_\rho } [ g_n (\eta_j ) (\nabla_{ j , j + 1 } f (\eta) )^2 ] 
+ \frac{2}{ n^2 g^\prime (0)^2 }  \sum_{ j \in \mathbb{Z} } E_{ \nu^n_\rho } [ g_n (\eta_j ) ] E_{\nu^n_\rho } [ F_j^2] 
\]
where for the second term we noted that $F_j $ is independent of $\eta_j $. Moreover, note that the first term is bounded by $\| f \|^2_{1,n}$. Then, since $\sum_{ j } E_{ \nu_\rho } [F^2_j ] \le C \ell \sum_{j } \varphi^2_j $ we conclude with the help of the Kipnis-Varadhan inequality (Proposition \ref{KV}) that 
\[
\mathbb{E}_n \bigg[ \sup_{ 0 \le t \le T } \bigg| \int_0^t \sum_{ j \in \mathbb{Z} } 
\big[ \overline{ W }_{ j -1 } ( s) ( \overline{ W }_j (s ) - \overrightarrow{W}^\ell_j (s) ) \varphi_j (s)  \big] ds \bigg|^2 \bigg] 
 \le C \frac{ \ell }{ n^2 } \int_0^T \sum_{ j \in \mathbb{Z} } ( \varphi_j (t ) )^2 dt 
\]
for some $C > 0 $. Hence we complete the proof.  
\end{proof}

\begin{lemma}
\label{BGmain}
There exists a positive constant $C$ such that 
\[
\begin{aligned}
&  \mathbb{E}_n \bigg[ \sup_{ 0 \le t \le T } \bigg| \int_0^t \sum_{ j \in \mathbb{Z} } 
\big[ \overrightarrow{ W }^\ell_j (s) ( \overline{W}_{j -1} (s) - \overrightarrow{W}^\ell_j (s ) ) + \frac{ \sigma^2_n ( \rho ) }{ \ell }  \big]  \varphi_j (s) ds \bigg|^2 \bigg] \\
& \quad \le C \bigg( \frac{ \ell }{ n^2 }  + \frac{ T }{\ell^2  } \bigg) \int_0^T  \sum_{ j \in \mathbb{Z} } \varphi_j (t )^2 dt .
\end{aligned}
\]
\end{lemma}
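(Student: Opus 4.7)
The plan is to decompose the integrand $H_j := \overrightarrow{W}^\ell_j(\overline{W}_{j-1} - \overrightarrow{W}^\ell_j) + \sigma_n^2(\rho)/\ell$ as the sum of a gradient-type term, to be controlled by Kipnis--Varadhan (giving the $\ell/n^2$ contribution), and a small-variance remainder, to be controlled by a direct Cauchy--Schwarz in time (giving the $T/\ell^2$ contribution). The starting point is the telescoping identity
\[
\overline{W}_{j-1} - \overrightarrow{W}^\ell_j \;=\; \sum_{m=0}^{\ell-1}\bigl(1 - m/\ell\bigr)(W_{j-1+m} - W_{j+m}),
\]
which, after re-indexing with $k = j-1+m$, rewrites $\sum_j \overrightarrow{W}^\ell_j(\overline{W}_{j-1} - \overrightarrow{W}^\ell_j)\varphi_j$ as $\sum_k \widetilde G_k(W_k - W_{k+1}) + R$, designed so that $\widetilde G_k$ does not depend on $\eta_k,\eta_{k+1}$ (any contribution to $\overrightarrow{W}^\ell_j$ from those sites is peeled off into the remainder $R$, together with the $+\sigma_n^2(\rho)/\ell$ centering, which is chosen precisely to keep $R$ mean-zero under $\nu^n_\rho$).

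For the gradient part, mirroring the proof of Lemma \ref{oneblock}, the variational characterization of the $H^{-1,n}$-norm together with Lemma \ref{IBP} and Young's inequality gives
\[
\Bigl\|\sum_k \widetilde G_k(W_k - W_{k+1})\Bigr\|^2_{-1,n} \;\le\; \frac{C}{n^2}\sum_k E_{\nu^n_\rho}\!\bigl[g_n(\eta_k)\,\widetilde G_k^2\bigr].
\]
Since $\widetilde G_k$ is an $O(\ell)$-term linear combination of block averages $\overrightarrow{W}^\ell_j$ with weights $\varphi_j$ and $E_{\nu^n_\rho}[(\overrightarrow{W}^\ell_j)^2] = \sigma_n^2(\rho)/\ell$, a Cauchy--Schwarz computation, noting that each $j$ contributes to only $O(\ell)$ distinct $k$'s, yields $\sum_k E_{\nu^n_\rho}[\widetilde G_k^2] \le C\ell \sum_j \varphi_j^2$. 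Proposition \ref{KV} then delivers $\mathbb{E}_n[\sup_t|\int_0^t\sum_k \widetilde G_k(W_k - W_{k+1})\,ds|^2] \le C(\ell/n^2)\int_0^T\sum_j \varphi_j^2\,dt$.

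For the remainder $R$, invariance of $\nu^n_\rho$ and the elementary bound
\[
\sup_{0 \le t \le T}\Bigl|\int_0^t R(s)\,ds\Bigr|^2 \;\le\; T\int_0^T R(s)^2\,ds
\]
reduce the task to showing $E_{\nu^n_\rho}[R(s)^2] \le C\sum_j \varphi_j(s)^2/\ell^2$. Exploiting the product structure of $\nu^n_\rho$, the cross-covariances $E_{\nu^n_\rho}[R_j R_{j'}]$ are computable by Wick-type pairings; the particular algebraic form in which the $\sigma_n^2(\rho)/\ell$ centering combines with the diagonal part of $(\overrightarrow{W}^\ell_j)^2$ and with the site-$(k,k+1)$ pieces stripped from $\overrightarrow{W}^\ell_j$ produces a cancellation that reduces the naive $1/\ell$-order covariance to the required $1/\ell^2$-order.

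\textbf{Main obstacle.} The delicate point is achieving the $1/\ell^2$ (rather than $1/\ell$) scaling for the remainder variance: a naive covariance computation on the centred square $(\overrightarrow{W}^\ell_j)^2 - \sigma_n^2(\rho)/\ell$ summed against $\varphi_j$ yields only $\sum_{j'}|\mathrm{Cov}((\overrightarrow{W}^\ell_j)^2,(\overrightarrow{W}^\ell_{j'})^2)| = O(1/\ell)$ due to the $O(\ell)$ overlapping neighbours, which is insufficient. The improved bound requires that $R$ be constructed so that both the diagonal residues of $(\overrightarrow{W}^\ell_j)^2$ and the site-$(k,k+1)$ corrections coming from the gradient rearrangement are packaged together to produce the cancellation; bookkeeping this cancellation is the core combinatorial step of the argument.
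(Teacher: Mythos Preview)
Your plan and the paper's proof diverge at the point where the coefficient $\overrightarrow{W}^\ell_j$ depends on the bond sites. You propose to peel off the dependence of $\overrightarrow{W}^\ell_j$ on $\eta_k,\eta_{k+1}$ \emph{before} integrating by parts, so that Lemma~\ref{IBP} applies to a clean $\widetilde G_k$, and to absorb everything else into the remainder $R$. The paper instead applies Lemma~\ref{IBP} directly to the product $f\,\overrightarrow{W}^\ell_j$ and uses the Leibniz rule
\[
E_{\nu^n_\rho}\bigl[\overrightarrow{W}^\ell_j(W_{j+i-1}-W_{j+i})f\bigr]
= -E_{\nu^n_\rho}\bigl[(\sigma_{j+i-1,j+i}\overrightarrow{W}^\ell_j)(\nabla_{j+i-1,j+i}f)W_{j+i-1}\bigr]
  -E_{\nu^n_\rho}\bigl[f(\nabla_{j+i-1,j+i}\overrightarrow{W}^\ell_j)W_{j+i-1}\bigr].
\]
The first summand already gives the $\ell/n^2$ Kipnis--Varadhan bound. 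The second carries an explicit factor $\nabla_{j+i-1,j+i}\overrightarrow{W}^\ell_j=O(\ell^{-1})$; after two further algebraic decompositions of $W_{j+i-1}(W^-_{j+i-1}-W_{j+i-1}+W^+_{j+i}-W_{j+i})$ and one more IBP, the paper isolates an explicit remainder
\[
\mathcal{R}^n_t(\varphi)=\frac{1}{\ell}\int_0^t\sum_j\Bigl[-\sum_{i=1}^{\ell-1}(W_{j+i}^2-W_{j+i-1}^2)\psi_i - W_j(W_j-W_{j-1}) + \sigma_n^2(\rho)\Bigr]\varphi_j\,ds,
\]
whose telescoping sum collapses to the block average $(\overrightarrow{W^2})^\ell_j$ plus local centred terms; the $T/\ell^2$ variance is then immediate from $E_{\nu^n_\rho}[((\overrightarrow{W^2})^\ell_j)^2]=O(\ell^{-1})$ and a congruence-class decomposition of the $j$-sum. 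No hidden cancellation is needed because the $\ell^{-1}$ already comes from $\nabla\overrightarrow{W}^\ell_j$.

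Your route is viable but the obstacle you flag is genuine and is not resolved by Wick pairings alone. After peeling, the remainder is $\sum_k\ell^{-1}\bigl[\alpha_k\overline{W}_k+\beta_k\overline{W}_{k+1}\bigr](W_k-W_{k+1})$ with $\alpha_k=\sum_{m\ge1}\psi_m\varphi_{k+1-m}$ and $\beta_k=\sum_{m\ge0}\psi_m\varphi_{k+1-m}$, both of size $O(\ell)$; a naive variance bound is $O(1)$, not $O(\ell^{-2})$. The cancellation you need comes from re-indexing so that each $\overline{W}_k$ carries the combination $\alpha_k-\beta_{k-1}=-\ell^{-1}\sum_{m=0}^{\ell-1}\varphi_{k-m}$ on the diagonal and $\beta_k-\alpha_k=\varphi_{k+1}$ on the off-diagonal $\overline{W}_k\overline{W}_{k+1}$, both $O(1)$ rather than $O(\ell)$; only after this do the centred diagonal and bilinear pieces each have variance $O(\ell^{-2}\sum_j\varphi_j^2)$. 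This is the ``core combinatorial step'' you left open, and it must be written out for the argument to close. The paper's Leibniz-rule approach sidesteps it entirely by producing the $\ell^{-1}$ factor structurally rather than by cancellation of $O(\ell)$ coefficients.
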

\begin{proof}
Recalling the definition of local averages, we have 
\[
\begin{aligned}
\sum_{  j \in \mathbb{Z} } \overrightarrow{ W }^\ell_j ( \overline{W}_{j - 1 } - \overrightarrow{ W }^\ell_j ) \varphi_j 
 = \sum_{ j \in \mathbb{Z} } \overrightarrow{ W }^\ell_j  \sum_{ i = 0 }^{ \ell - 1 } ( \overline{W}_{ j + i - 1 } - \overline{W}_{ j + i } ) \psi_i \varphi_j  
\end{aligned}
\]
where $\psi_i = ( \ell - i ) / \ell $. Then we fix any local function $f : \mathscr{X} \to \mathbb{R} $ and apply integration by parts (Lemma \ref{IBP}). 

First we consider the case $1 \le i \le \ell - 1 $. To make notations simple, we write $W_j^+ = g_n (\eta_j + 1 ) / g^\prime (0) $ and $W^-_j = g_n (\eta_j -1 )/ g^\prime (0) $. Then noting 
\[
\begin{aligned}
\nabla_{ j + i -1 , j + i } \overrightarrow{ W }^\ell_j  
 =  \ell^{ - 1 } ( W^-_{ j + i -1 }  - W_{ j + i -1} + W^+_{ j + i } - W_{ j + i } )
\end{aligned}
\]
provided $\eta_{ j + i - 1 } > 0$, we have 
\[
\begin{aligned}
& E_{ \nu^n_\rho } \big[ \overrightarrow{ W }^\ell_j ( \overline{W}_{ j + i - 1}  - \overline{W}_{ j + i } ) f  \big] \\
&\quad  = - E_{ \nu^n_\rho } [ (\sigma_{ j + i - 1 , j + i  }  \overrightarrow{ W }^\ell_j  ) ( \nabla_{ j + i - 1 , j + i  } f  ) W_{ j + i  -1 } ]
- E_{ \nu^n_\rho } [ f ( \nabla_{ j + i -1 , j + i }  \overrightarrow{ W }^\ell_j ) W_{ j + i - 1 } ]  .
\end{aligned}
\]
For the first term in the right-hand side, we can use Young's inequality and then apply the Kipnis-Varadhan inequality to get a bound of order $\ell /n^2 $ by the same calculation as in Lemma \ref{oneblock}. For the second term, to make use a similar procedure, we decompose   
\[
\begin{aligned}
& W_{j + i - 1 } ( W^-_{ j + i -1 }  - W_{ j + i -1} + W^+_{ j + i } - W_{ j + i } ) \\
& \quad = - ( W_{ j + i -1 } W_{ j + i }  - W_{ j + i -1 }  W^-_{ j + i - 1  } )  
+ ( W_{ j + i -1 } W^+_{ j + i } - W_{ j + i }^2 ) 
+ ( W_{ j + i  } ^2 - W_{ j + i - 1 }^2 ) .
\end{aligned}
\]
Then one can find that 
\[
\begin{aligned}
& E_{ \nu^n_\rho } [ f ( \nabla_{ j + i -1 , j + i }  \overrightarrow{ W }^\ell_j ) W_{ j + i - 1 } ] \\
& \quad = - \ell^{ -1 } E_{ \nu^n_\rho } [ W_{ j + i - 1 } W^-_{ j + i - 1} ( \nabla_{ j + i - 1, j + i } f ) ] 
+ \ell^{ -1 } E_{ \nu^n_\rho } [ W_{ j + i }^2 (\nabla_{ j + i, j + i - 1 } f ) ]  \\
& \qquad  + \ell^{ - 1 } E_{ \nu^n_\rho } [ f (W_{ j + i }^2 - W_{ j + i - 1 }^2 ) ] .
\end{aligned}
\]
From the first two terms we deduce the $\ell /n^2 $-bound with the help of the Kipnis-Varadhan inequality and thus hereafter we may only consider the third term. 

On the other hand, we consider the case $i = 0 $. Similarly to the above, a Leibniz rule for the derivative operator $\nabla_{j-1, j}$ yields  
\begin{equation*}
\begin{aligned}
& E_{ \nu^n_\rho } \big[ \overrightarrow{ W }^\ell_j ( \overline{W}_{ j - 1} - \overline{W}_j ) f \big] \\
& \quad  = - E_{ \nu^n_\rho } [ (\sigma_{ j - 1 , j }  \overrightarrow{ W }^\ell_j ) ( \nabla_{ j  -1 , j  } f  ) W_{ j  -1 } ]
- E_{ \nu^n_\rho } [ f ( \nabla_{ j -1 , j }  \overrightarrow{ W }^\ell_j  ) W_{ j  - 1 } ] .
\end{aligned}
\end{equation*}
Then the first term in the last display gives an $\ell /n^2$-bound so that we may focus on the second term. Moreover, note that 
\[
\begin{aligned}
\nabla_{ j -1 , j } \overrightarrow{ W }^\ell_j 
  = \ell^{ - 1 } ( W^+_j   - W_j )    
\end{aligned}
\]
provided $\eta_{j -1 } > 0 $. To take advantage of a similar calculation as the case $i \ge 1$, we use a decomposition 
\[
\begin{aligned}
 W_{ j - 1 } ( W^+_j - W_j ) 
 = (W_{j-1} W_j^+ - W_j^2) + (W_j^2 - W_{j-1} W_j) .
\end{aligned}
\]
Then we have that
\begin{equation}
\label{i0}
\begin{aligned}
 E_{ \nu^n_\rho } [ f ( \nabla_{ j -1 , j }  \overrightarrow{ W }^\ell_j  ) W_{ j  - 1 } ] 
 = \ell^{-1} E_{\nu^n_\rho}[W_j^2 (\nabla_{j,j-1}f)] 
 + \ell^{-1} E_{\nu^n_\rho} [W_j (W_j -W_{j-1})f] . 
\end{aligned}
\end{equation}
From the first term, we get an extra $\ell /n^2 $ factor to bound them and thus we only consider the second term. 

By this line, we obtained 
\[
\begin{aligned}
& \mathbb{E}_n \bigg[ \sup_{ 0 \le t \le T } \bigg| \int_0^t \sum_{ j \in \mathbb{Z} } 
\big[ \overrightarrow{ W }^\ell_j (s) ( \overline{W}_{j -1} (s) - \overrightarrow{W}^\ell_j (s ) ) + \frac{ \sigma^2_n ( \rho ) }{ \ell }  \big]  \varphi_j (s) ds - \mathcal{R}^n_t (\varphi) \bigg|^2 \bigg] \\
& \quad \le C \frac{ \ell }{ n^2 } \int_0^T  \sum_{ j \in \mathbb{Z} } \varphi_j (t )^2 dt 
\end{aligned}
\]
where 
\[
\mathcal{R}^n_t (\varphi) 
= \frac{1}{\ell} \int_0^t \sum_{j \in \mathbb{Z}} 
\bigg[ - \sum_{i=1}^{\ell-1} (W^2_{j+i}-W^2_{j+i-1})\psi_i 
- W_j (W_j - W_{j-1}) + \sigma_n^2(\rho) \bigg] \varphi^n_j (s )ds .
\]
Now our task is to estimate $\mathcal{R}^n$ directly. Here we notice that  
\[
\sum_{i=1}^{\ell-1} (W^2_{j+i} - W^2_{j+i-1})\psi_i = (\overrightarrow{W^2})^\ell_j 
- ( W^2_j - E_{\nu_\rho}[W^2_j] )
\]
where $(\overrightarrow{ W^2 })^\ell_j = \ell^{ - 1} \sum_{ i = 0, \ldots, \ell- 1 } ( W^2_{j+i} - E_{\nu_\rho}[W^2_{j+i}] ) $ is centered local averages defined similarly for $W_j$. Then by Schwarz's inequality and stationarity, we have 
\[
\begin{aligned}
\mathbb{E}_n  \bigg[ \sup_{ 0 \le t \le T} \bigg| \int_0^t \frac{ 1 }{ \ell } \sum_{ j \in \mathbb{Z} } 
(\overrightarrow{W^2})^\ell_j (s) \varphi_j (s) ds  \bigg|^2   \bigg] 
& \le \frac{ T }{ \ell^2 } \mathbb{E}_n \bigg[ \sup_{ 0 \le t \le T } \int_0^t \bigg( \sum_{ j \in \mathbb{Z} } 
(\overrightarrow{W^2})^\ell_j (s) \varphi_j (s) ds \bigg)^2 \bigg] \\
& \le \frac{ T }{ \ell^2 } \int_0^T \mathbb{E}_n \bigg[ \bigg( \sum_{ j \in \mathbb{Z} } 
(\overrightarrow{W^2})^\ell_j (s) \varphi_j (s) \bigg)^2 \bigg] ds .
\end{aligned}
\]
However, noting $(\overrightarrow{W^2})^\ell_j $ and $(\overrightarrow{W^2})^\ell_k $ are independent if $| j - k | \ge \ell $, we have that  
\[
\begin{aligned}
\mathbb{E}_n \bigg[ \bigg( \sum_{ j \in \mathbb{Z} } (\overrightarrow{W^2})^\ell_j  \varphi_j \bigg)^2 \bigg] 
& = \mathbb{E}_n \bigg[ \bigg( \sum_{ k = 0 }^{ \ell - 1 } \sum_{ j \in \mathbb{Z} } (\overrightarrow{W^2})^\ell_{ \ell j + k }   \varphi_{ \ell j + k } \bigg)^2 \bigg] \\
& \le \ell \sum_{ k = 0 }^{ \ell -1 } \mathbb{E}_n \bigg[ \bigg( \sum_{ j \in \mathbb{Z} } (\overrightarrow{W^2})^\ell_{ \ell j + k }   \varphi_{ \ell j + k } \bigg)^2 \bigg] \\
& = \ell \sum_{ k = 0 }^{ \ell - 1 } \sum_{ j \in \mathbb{Z} } \mathbb{E}_n \big[ \big( (\overrightarrow{W^2})^\ell_{ \ell j + k } \big)^2 \big] \varphi_{ \ell j + k }^2 
\le C \sum_{ k = 0 }^{ \ell -1 } \sum_{ j \in \mathbb{Z} } \varphi_{ \ell j + k }^2  
\end{aligned}
\]
for some $C > 0 $ since $\mathbb{E}_n [ ( (\overrightarrow{W^2})^\ell_j )^2 ] \le C \ell^{ - 1 } $ for each $j $. On the other hand, since random variables $W_j^2-E_{\nu_\rho} [W^2_j]$ and $W_j(W_j - W_{j-1}) - \sigma_n^2(\rho)$ are centered, Schwarz's inequality bring us the desired estimate. Hence combining all the estimates we complete the proof. 
\end{proof}

\section{Tightness}
\label{sec:tightness}
Recall the martingale decomposition \eqref{decomposition}. Note that according to Lemma \ref{antisymm} for the anti-symmetric part, we may consider the modified process $\{ \tilde{ \mathcal{B} }^n_t : t \in [0,T] \} $ defined by \eqref{modified} instead of the original process $\{ \mathcal{B}^n_t : t \in [0,T] \} $. We show tightness of each process as follows. 

\begin{lemma}
\label{tightness}
The sequences $\{ \mathcal{X}^n_t : t \in [0, T ] \}_{ n \in \mathbb{N} } $, $\{ \mathcal{M}^n_t : t \in [0, T ] \}_{ n \in \mathbb{N} } $, $\{ \mathcal{S}^n_t : t \in [0, T ] \}_{ n \in \mathbb{N} } $ and $\{ \tilde{\mathcal{B}}^n_t : t \in [0, T ] \}_{ n \in \mathbb{N} } $, when the processes start from the invariant measure $\nu^n_\rho $, are tight in the uniform topology on $D ([0, T ] , \mathcal{S}^\prime (\mathbb{R} ) ) $. 
\end{lemma}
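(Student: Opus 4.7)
My plan is to apply Mitoma's criterion, reducing tightness in $D([0,T], \mathcal{S}^\prime(\mathbb{R}))$ for each of the four sequences to tightness of the real-valued process $X^n_t(\varphi)$ for every fixed $\varphi \in \mathcal{S}(\mathbb{R})$. Since each elementary particle jump alters the field by at most $O(n^{-1/2})$, any limit point has continuous trajectories, which upgrades Skorohod tightness to tightness in the uniform topology; concretely, it suffices to verify Aldous' criterion (i.e.\ a $\theta$-uniform increment bound $\mathbb{E}_n[|X^n_{\tau+\theta}(\varphi) - X^n_\tau(\varphi)|^2] \le C\theta^\beta$ with $\beta>0$) together with tightness of the one-dimensional marginals. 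The latter is immediate: by stationarity each $\mathcal{X}^n_t(\varphi)$ has the law of $\mathcal{X}^n_0(\varphi)$, which under the product measure $\nu^n_\rho$ is a centered iid sum whose variance is uniformly bounded in $n$.

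For the martingale part $\mathcal{M}^n(\varphi)$, I would apply Burkholder-Davis-Gundy to the explicit formula \eqref{qv}; stationarity, the uniform bound $E_{\nu^n_\rho}[g_n(\eta_0)] = \Phi_n(\rho) = O(1)$, and the Riemann-sum estimate $n^{-1}\sum_j (n(\varphi^n_{j+1} - \varphi^n_j))^2 \le C\|\partial_x \varphi\|^2_{L^2(\mathbb{R})}$ together yield $\mathbb{E}_n[|\mathcal{M}^n_t(\varphi) - \mathcal{M}^n_s(\varphi)|^2] \le C(t-s)$. For the symmetric part $\mathcal{S}^n(\varphi)$, Cauchy-Schwarz in time combined with the independence of $g_n(\eta_j)$ under $\nu^n_\rho$ reduces matters to variance and mean computations: the mean of $\sum_j g_n(\eta_j)\Delta^n\varphi^n_j$ vanishes by telescoping $\sum_j \Delta^n\varphi^n_j = 0$, while its variance is $\sigma_n^2(\rho)\sum_j (\Delta^n\varphi^n_j)^2 = O(n)$, giving $\mathbb{E}_n[|\mathcal{S}^n_t(\varphi) - \mathcal{S}^n_s(\varphi)|^2] \le C(t-s)^2$. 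Tightness of $\mathcal{X}^n(\varphi)$ itself then follows from the decomposition \eqref{decomposition} combined with Lemma \ref{antisymm}, once tightness of $\tilde{\mathcal{B}}^n$ is established.

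The principal obstacle is the modified antisymmetric part $\tilde{\mathcal{B}}^n(\varphi)$, which is a quadratic functional of the configuration and therefore requires the second-order Boltzmann-Gibbs principle. I would apply Theorem \ref{BG2} on the interval $[s,t]$ with a cutoff $\ell = \ell(n, t-s)$ to be optimized, replacing $\tilde{\mathcal{B}}^n_t(\varphi) - \tilde{\mathcal{B}}^n_s(\varphi)$ by $\int_s^t \sum_j \tau_j\mathcal{Q}^n_\rho(\ell;r)\nabla^n\varphi^n_j(r)\,dr$ up to an $L^2$-error bounded by a constant times $(\ell/n^2 + (t-s)/\ell^2)(t-s)\,n\|\partial_x\varphi\|^2_{L^2}$. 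To estimate the remaining quadratic term, I would decompose the sum over $j \in \mathbb{Z}$ into $\ell$ interlaced subsequences of step $\ell$; within each subsequence the shifted variables $\tau_j\mathcal{Q}^n_\rho(\ell;r)$ are independent with second moment $O(\ell^{-2})$, so that Cauchy-Schwarz across subsequences yields $\mathbb{E}_n[(\sum_j \tau_j\mathcal{Q}^n_\rho(\ell)\nabla^n\varphi^n_j)^2] \le C(n/\ell)\|\partial_x\varphi\|^2_{L^2}$. Optimizing with $\ell \sim (t-s)^{1/2} n$ balances all contributions and yields $\mathbb{E}_n[|\tilde{\mathcal{B}}^n_t(\varphi) - \tilde{\mathcal{B}}^n_s(\varphi)|^2] \le C(t-s)^{3/2}$, amply sufficient for Aldous' criterion. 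A recurring bookkeeping subtlety is the moving frame $\varphi^n_j(r) = \varphi((j-f_n r)/n)$; however, the frame velocity $f_n$ is precisely tuned in \eqref{eq:framing} so that the linear component of the antisymmetric drift cancels, and the residual continuous-to-discrete derivative correction is negligible, as sketched via the error term $E^n_t(\varphi)$ preceding \eqref{eq:wdef}.
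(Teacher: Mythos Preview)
Your approach is essentially identical to the paper's: Mitoma's criterion, then the quadratic-variation bound for $\mathcal{M}^n$, Cauchy--Schwarz for $\mathcal{S}^n$, and Theorem~\ref{BG2} with the interlaced-sublattice $L^2$ estimate for $\tilde{\mathcal{B}}^n$, optimizing at $\ell \sim (t-s)^{1/2}n$ to get the $(t-s)^{3/2}$ increment bound.

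There is one small gap. Your choice $\ell \sim (t-s)^{1/2} n$ is only admissible when $t-s \ge 1/n^2$; otherwise $\ell < 1$ and the Boltzmann--Gibbs machinery does not apply. Whether you package the conclusion as Aldous or Kolmogorov--Centsov, you still need to control increments over arbitrarily short time intervals for each fixed $n$. The paper closes this case by a direct Cauchy--Schwarz estimate on $\tilde{\mathcal{B}}^n$ itself (no replacement): since $\overline{W}_{j-1}\overline{W}_j$ are centered and one-dependent,
\[
\mathbb{E}_n\big[|\tilde{\mathcal{B}}^n_t(\varphi)-\tilde{\mathcal{B}}^n_s(\varphi)|^2\big]
\le C(t-s)^2\, n\,\|\partial_x\varphi\|^2_{L^2(\mathbb{R})}
\le C(t-s)^{3/2}\,\|\partial_x\varphi\|^2_{L^2(\mathbb{R})}
\]
whenever $t-s \le 1/n^2$, which matches the long-time bound and completes the Kolmogorov--Centsov criterion.
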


To prove tightness of a sequence of processes, the following criteria are helpful.

\begin{proposition}[Mitoma's criterion, \cite{mitoma1983tightness}]
\label{Mitoma}
A sequence of $\mathcal{S}^\prime (\mathbb{R} ) $-valued processes $\{ \mathcal{Y}^n_t : t \in [0, T ] \}_{ n \in \mathbb{N} } $ with trajectories in $D ([0, T ] , \mathcal{S}^\prime (\mathbb{R} ) ) $ is tight with respect to the Skorohod topology if and only if the sequence $\{ \mathcal{Y}^n_t (\varphi ) : t \in [0, T ] \}_{ n \in \mathbb{N}  }  $ of real-valued processes is tight with respect to the Skorohod topology of $D ([0, T ] , \mathbb{R} ) $ for any $\varphi \in \mathcal{S} ( \mathbb{R} ) $.   
\end{proposition}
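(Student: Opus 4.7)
The plan is to prove both implications separately, with the forward direction being routine and the reverse direction requiring the nuclear Fréchet structure of $\mathcal{S}(\mathbb{R})$. For the forward direction: for every fixed $\varphi \in \mathcal{S}(\mathbb{R})$, the evaluation map $\mathcal{Y} \mapsto \mathcal{Y}(\varphi)$ is continuous from $\mathcal{S}'(\mathbb{R})$ into $\mathbb{R}$ by the very definition of the topology on $\mathcal{S}'(\mathbb{R})$, so it lifts to a continuous map $D([0,T], \mathcal{S}'(\mathbb{R})) \to D([0,T], \mathbb{R})$. Tightness is preserved under continuous maps, so if $\{\mathcal{Y}^n\}$ is tight then so is its image $\{\mathcal{Y}^n(\varphi)\}$.

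The substantive direction is the converse, and the strategy is to realize $\mathcal{S}(\mathbb{R})$ as the projective limit of a chain of Hilbert spaces. Using Hermite functions $\{h_k\}_{k \geq 0}$ as an orthonormal basis of $L^2(\mathbb{R})$, define for each $p \in \mathbb{R}$ the norm $\|\varphi\|_p^2 = \sum_{k \geq 0} (2k+1)^{2p} \langle \varphi, h_k \rangle^2$, and let $H_p$ be the corresponding Hilbert completion. Then $\mathcal{S}(\mathbb{R}) = \bigcap_{p \geq 0} H_p$ and $\mathcal{S}'(\mathbb{R}) = \bigcup_{p \geq 0} H_{-p}$, and the embedding $H_{-p} \hookrightarrow H_{-q}$ is Hilbert-Schmidt as soon as $q - p > 1/2$. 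I would aim to lift the one-dimensional hypothesis to tightness of $\{\mathcal{Y}^n\}$ in $D([0,T], H_{-p})$ for some $p$ large enough, from which tightness in $D([0,T], \mathcal{S}'(\mathbb{R}))$ follows by the continuity of the inclusion.

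Concretely, I would verify the two ingredients of tightness on $D([0,T], H_{-p})$ separately. Applying the one-dimensional hypothesis with $\varphi = h_k$ gives, for each $k$, a constant $M_k$ with $\sup_n \mathbb{P}(\sup_{t \leq T} |\mathcal{Y}^n_t(h_k)| > M_k) \leq \varepsilon 2^{-k}$; choosing $p$ so that $\sum_k (2k+1)^{-2p} M_k^2 < \infty$ yields the compact containment condition in $H_{-p}$, since balls in $H_{-p}$ are relatively compact in $H_{-q}$ for $q > p + 1/2$. For the modulus of continuity I would appeal to an Aldous-type criterion in $H_{-p}$, again reducing to the coordinates $\mathcal{Y}^n_t(h_k)$ and using the summability provided by the Hilbert--Schmidt structure.

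The main obstacle is precisely the last step: arranging the a priori estimates on the individual Hermite coefficients $\mathcal{Y}^n_t(h_k)$ in a manner uniform enough in both $k$ and $n$ that they add up in the $H_{-p}$-norm. This is the point at which nuclearity of $\mathcal{S}(\mathbb{R})$ is genuinely used, since one must trade off growth in $k$ of the Hermite coefficients against decay of the weights $(2k+1)^{-2p}$. In practice this is handled by a diagonal extraction along a countable dense set of test functions together with the compactness furnished by the Hilbert-Schmidt embeddings, after which the limit in $D([0,T],\mathcal{S}'(\mathbb{R}))$ is identified through its pointwise-tight one-dimensional marginals.
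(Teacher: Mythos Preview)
The paper does not prove this proposition at all: it is stated as a citation of Mitoma's 1983 result and used as a black box, with no argument supplied. So there is nothing in the paper to compare your proposal against.

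That said, your sketch is broadly along the lines of Mitoma's original argument, but it leaves the genuinely hard step unresolved. You correctly identify the obstacle yourself: from tightness of $\{\mathcal{Y}^n(h_k)\}_n$ for each fixed $k$ you extract constants $M_k$, but nothing prevents $M_k$ from growing so fast in $k$ that no choice of $p$ makes $\sum_k (2k+1)^{-2p} M_k^2$ finite. Your proposed fix, a diagonal extraction along a countable dense set of test functions, does not by itself produce the needed uniformity; one can extract subsequential limits coordinatewise without the limit lying in any $H_{-p}$. Mitoma's actual argument closes this gap by a different route: one first shows (via a regularity lemma for c\`adl\`ag paths in $\mathcal{S}'(\mathbb{R})$) that each individual trajectory $t \mapsto \mathcal{Y}^n_t$ already takes values in some $H_{-p_n}$, and then uses a Baire-category/closed-graph type argument on the family of laws to upgrade this to a single $p$ valid uniformly in $n$. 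Only after that uniform $p$ is secured do the Hilbert--Schmidt embeddings and the coordinatewise tightness combine to give compact containment and equicontinuity in $D([0,T],H_{-q})$ for $q>p+1/2$. If you want to turn your sketch into a proof, that uniform-index step is the piece you need to supply.
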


\begin{proposition}[Aldous' criterion]
\label{Aldous}
A sequence $\{ X^n_t : t \in [0, T ] \}_{ n \in \mathbb{N} } $ of real-valued processes is tight with respect to the Skorohod topology of $D ([0, T ] , \mathbb{R} ) $ if the following two conditions hold. 
\begin{enumerate}
\item 
The sequence of real-valued random variables $\{ X^n_t \}_{ n \in \mathbb{N} } $ is tight for any $ t\in [0, T ] $. 

\item 
For any $\varepsilon > 0$, 
\[
\lim_{ \delta \to 0 } \limsup_{  n \to \infty } \sup_{ \gamma \le \delta } \sup_{ \tau \in \mathcal{T}_T } 
\mathbb{P}_n \big( | X^n_{ \tau +  \gamma } - X^n_\tau  | > \varepsilon \big) = 0 
\]
where $\mathcal{T}_T$ is the set of stopping times bounded by $T $ using the convention $X^n_{ \tau + \gamma } = X^n_T$ if $ \tau + \gamma > T$. 
\end{enumerate}
\end{proposition}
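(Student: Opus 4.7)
The plan is to invoke Mitoma's criterion (Proposition \ref{Mitoma}) and reduce the problem to tightness in $D([0,T],\mathbb{R})$ of each real-valued process $\{Y^n_t(\varphi)\}_n$ for $Y \in \{\mathcal{X},\mathcal{M},\mathcal{S},\tilde{\mathcal{B}}\}$ and every fixed $\varphi \in \mathcal{S}(\mathbb{R})$. For each of these I would verify the two conditions of Aldous' criterion (Proposition \ref{Aldous}). By stationarity of $\nu^n_\rho$ and the strong Markov property applied at the stopping time $\tau$, the modulus-of-continuity estimate reduces to bounding $\mathbb{E}_n[|Y^n_\gamma(\varphi) - Y^n_0(\varphi)|^2] \le C\gamma$ uniformly in $n$ and $\gamma \le T$ (a time-shift $s \mapsto s - \tau$ induces only a spatial shift in $\varphi^n_j(s)$, which leaves the translation-invariant $L^2$-sums appearing in the estimates unchanged); fixed-time marginal tightness follows in parallel from the same bound at $\gamma = t$.

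For $\mathcal{M}^n$, the optional stopping theorem combined with the explicit quadratic variation \eqref{qv} yields $\mathbb{E}_n[|\mathcal{M}^n_{\tau+\gamma}(\varphi)-\mathcal{M}^n_\tau(\varphi)|^2] = \mathbb{E}_n[\langle\mathcal{M}^n(\varphi)\rangle_{\tau+\gamma}-\langle\mathcal{M}^n(\varphi)\rangle_\tau] \le C\gamma$, using that $E_{\nu^n_\rho}[g_n(\eta_0)] = \Phi_n(\rho)$ is bounded uniformly in $n$ and $n^{-1}\sum_j [n(\varphi^n_{j+1}-\varphi^n_j)]^2$ is a Riemann-type sum for $\|\partial_x\varphi\|^2_{L^2}$. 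For $\mathcal{S}^n$, discrete integration by parts gives $\sum_j \Delta^n \varphi^n_j(s) = 0$, so I may replace $g_n(\eta^n_j)$ by its $\nu^n_\rho$-centered version; then Cauchy-Schwarz in time together with independence of the $\eta_j$ under the product measure $\nu^n_\rho$ yields $\mathbb{E}_n[|\mathcal{S}^n_{\tau+\gamma}(\varphi)-\mathcal{S}^n_\tau(\varphi)|^2] \le C\gamma^2 n^{-1}\sum_j(\Delta^n\varphi^n_j)^2 \le C\gamma^2$.

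The decisive case is $\tilde{\mathcal{B}}^n$, because a naive Cauchy-Schwarz bound on the quadratic integrand $\overline{W}_{j-1}\overline{W}_j$ produces an unusable $O(n)$ estimate. The idea is to apply the second-order Boltzmann-Gibbs principle (Theorem \ref{BG2}) with the choice $\ell = n$: reducing to the interval $[0,\gamma]$ via stationarity, one obtains
\[
\mathbb{E}_n[|\tilde{\mathcal{B}}^n_\gamma(\varphi)|^2] \le C\bigg(\frac{\ell\gamma}{n} + \frac{\gamma^2 n}{\ell^2}\bigg) + \mathbb{E}_n[|\mathcal{R}^n_\gamma(\varphi;\ell)|^2],
\]
where $\mathcal{R}^n_\gamma(\varphi;\ell) = \int_0^\gamma \sum_j \tau_j \mathcal{Q}^n_\rho(\ell;s) \nabla^n \varphi^n_j(s)\,ds$. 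Since $\tau_j \mathcal{Q}^n_\rho(\ell)$ is $\nu^n_\rho$-centered, supported on $\ell$ consecutive coordinates, and has variance $O(\ell^{-2})$ (a fourth-moment calculation on $\overrightarrow{W}^\ell_j$), independence across non-overlapping blocks of length $\ell$ together with the Young inequality gives $\mathbb{E}_n[(\sum_j \tau_j \mathcal{Q}^n_\rho \nabla^n \varphi^n_j)^2] = O(n/\ell)$, and hence $\mathbb{E}_n[|\mathcal{R}^n_\gamma(\varphi;\ell)|^2] \le C\gamma^2 n/\ell$ by Cauchy-Schwarz in time. With $\ell = n$ the total estimate becomes $C(\gamma + \gamma^2)$, which verifies Aldous' condition; marginal tightness at $\gamma = t$ is obtained the same way. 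This step is the main obstacle, and it is precisely Theorem \ref{BG2} that unlocks it.

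Tightness of $\mathcal{X}^n$ is then obtained from the decomposition \eqref{decomposition}, Lemma \ref{antisymm}, and tightness of $\mathcal{M}^n, \mathcal{S}^n, \tilde{\mathcal{B}}^n$, together with the Gaussian marginal tightness of $\mathcal{X}^n_0(\varphi)$ under the i.i.d.\ product measure $\nu^n_\rho$. To upgrade from the Skorokhod to the uniform topology, note that $\mathcal{S}^n$ and $\tilde{\mathcal{B}}^n$ have continuous trajectories by construction, while the jumps of $\mathcal{X}^n$ and $\mathcal{M}^n$ induced by a single particle hop are at most of order $n^{-3/2}\|\partial_x \varphi\|_\infty$, hence vanish uniformly in the limit.
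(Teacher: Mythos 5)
Your proposal does not prove the statement in question. The statement is Proposition \ref{Aldous} itself --- Aldous' criterion, the general assertion that conditions (1) and (2) imply tightness of a sequence of real-valued c\`adl\`ag processes in the Skorokhod topology. What you have written is instead a proof of Lemma \ref{tightness} (tightness of $\mathcal{X}^n$, $\mathcal{M}^n$, $\mathcal{S}^n$, $\tilde{\mathcal{B}}^n$), i.e.\ the \emph{application} of the criterion carried out in Section \ref{sec:tightness}; worse, your argument \emph{invokes} Proposition \ref{Aldous} as a black box, so as a proof of that proposition it is circular. The paper itself states Aldous' criterion without proof, as a standard result from the weak-convergence literature, so there is no "paper proof" to match --- but a genuine proof would have to establish the implication from (1)--(2) to tightness, which is an entirely different kind of argument from anything in your write-up.

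Concretely, what is missing is the following: tightness in $D([0,T],\mathbb{R})$ is characterized (via the Arzel\`a--Ascoli-type theorem for the Skorokhod space) by tightness of the marginals together with uniform control of the c\`adl\`ag modulus of continuity $w'_{X^n}(\delta)$. The content of Aldous' criterion is that the stopping-time condition (2) suffices to control $w'_{X^n}(\delta)$ in probability; the classical proof proceeds by contradiction, constructing from an oscillation of size $\varepsilon$ occurring within a time window of length $\delta$ a pair of stopping times $\tau \le \tau + \gamma$ with $\gamma \le \delta$ at which $|X^n_{\tau+\gamma} - X^n_\tau| > \varepsilon/2$ with non-negligible probability, contradicting (2). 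None of the material in your proposal --- the quadratic variation of $\mathcal{M}^n$, the Boltzmann--Gibbs principle for $\tilde{\mathcal{B}}^n$, the Kolmogorov--Centsov-type moment bounds --- bears on this. (As a side remark, the estimates you sketch are essentially those the paper uses in Section \ref{sec:tightness}, and they would be a reasonable outline for Lemma \ref{tightness}; but that is not the statement you were asked to prove.)
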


The rest of this subsection is devoted to prove Lemma \ref{tightness}. With the help of Mitoma's criterion \cite{mitoma1983tightness}, it suffices to show tightness of sequences $\{ \mathcal{X}^n_t (\varphi) : t \in [0, T ] \}_{ n \in \mathbb{N} } $, $\{ \mathcal{S}^n_t (\varphi) : t \in [0, T ] \}_{ n \in \mathbb{N} } $, $\{ \mathcal{A}^n_t (\varphi) : t \in [0, T ] \}_{ n \in \mathbb{N} } $ and $\{ \mathcal{M}^n_t (\varphi) : t \in [0, T ] \}_{ n \in \mathbb{N} } $ with respect to the uniform topology on $D ([0, T ]  , \mathbb{R} ) $ for any given test function $\varphi \in \mathcal{S} (\mathbb{R} ) $. Moreover, one can notice that the sequence of random variables $\{ \mathcal{X}^n_0 (\varphi ) \}_{ n \in \mathbb{N}} $ converges to a mean-zero normal random variable with variance $\rho \| \varphi \|^2_{L^2(\mathbb{R})}$, which particularly shows that the sequence $\{ \mathcal{X}^n_0 \}_{n \in \mathbb{N} } $ is tight. Hence from here we focus on tightness of martingale, symmetric and anti-symmetric parts. 


\subsection{Martingale part}
First we consider the martingale term. Recall that quadratic variation of the martingale $\mathcal{M}^n_t (\varphi ) $ is given by \eqref{qv}. 
Then for any stopping time $\tau \in \mathcal{T}_T $ we have 
\[
\begin{aligned}
\mathbb{P}_{ \nu_\rho } \big(  | \mathcal{M}^n_{ \tau + \gamma } (\varphi) - \mathcal{M}^n_\tau (\varphi)  | > \varepsilon \big) 
& \le \varepsilon^{- 2 } \mathbb{E}_{\nu_\rho } \big[ | \mathcal{M}^n_{\tau + \gamma } (\varphi) - \mathcal{M}^n_\tau (\varphi)  |^2  \big] \\
& \le  \varepsilon^{ - 2 } \mathbb{E}_{ \nu_\rho } \bigg[ \int_\tau^{\tau + \gamma } \frac{1}{n} \sum_{ j \in \mathbb{Z} } g_n (\eta^n_j (s)) (\nabla^n \varphi^n_j )^2 (s) ds \bigg] \\
& \le C \varepsilon^{-2} \int_\tau^{\tau + \gamma} \frac{1}{n} \sum_{j \in \mathbb{Z} } (\nabla^n \varphi^n_j (s))^2 ds  
\end{aligned}
\]
since $E_{\nu^n_\rho } [g_n (\eta)] = \Phi_n (\rho) $  is convergent as a sequence of $n$. Hence the last term vanishes as $\gamma $ tends to zero for each $\varepsilon $ so that the second condition of Ardous' criteron (Proposition \ref{Aldous}) is satisfied. On the other hand, the first condition can be easily verified since an estimate $ \mathbb{E}_{\nu_\rho } [ \mathcal{M}^n_t (\varphi )^ 2] \le C t \| \varphi \|^2_{ L^2 (\mathbb{R} ) } $ ensures that the sequence $\{ \mathcal{M}^n_t (\varphi ) \}_{n \in \mathbb{N} } $ is uniformly bounded in $L^2 (\mathbb{P}_{\nu_\rho } ) $. Hence tightness of the martingale term is proved. 

\subsection{Symmetric part}
Next we show tightness of $\mathcal{S}^n $. Recalling the definition \eqref{eq:symm}, we have an expression 
\[
\mathcal{S}^n_t (\varphi ) 
= \frac{1}{ 2\sqrt{n} } \int_0^t \sum_{ j \in \mathbb{Z} } ( g_n ( \eta^n_j (s ) ) - \Phi_n (\rho) ) \Delta^n \varphi^n_j (s) ds 
\]
where we used the fact that the summation of $\Delta^n \varphi^n_j $ over $\mathbb{Z} $ equals to zero. Noting $E_{ \nu^n_\rho } [ g_n (\eta_j ) ] = \Phi_n (\rho) $, by Schwarz's inequality and stationarity, we have that 
\[
\begin{aligned}
\mathbb{E}_{ n } \big[ \big( \mathcal{S}^n_t (\varphi ) - \mathcal{S}^n_s (\varphi ) \big)^2 \big] 
& \le \frac{ t - s }{4 n } \mathbb{E}_n \bigg[ \int_s^t \bigg( \sum_{ j \in \mathbb{Z} } ( g_n (\eta^n_j ( r ) - \Phi_n (\rho ) ) \Delta^n \varphi^n_j (r ) \bigg)^2 dr \bigg] \\
& \le \frac{(t -s)^2}{ 4 } E_{\nu^n_\rho } \big[ ( g_n ( \eta_j ) - \Phi_n (\rho) )^2 \big] 
\sup_{ s \le r \le t  } \bigg( \frac{ 1 }{ n } \sum_{ j \in \mathbb{Z} } (\Delta^n \varphi^n_j  (r) )^2 \bigg) \\
& \le C  (t - s )^2 \| \partial^2_{ x } \varphi \|^2_{ L^2 (\mathbb{R} ) }     
\end{aligned}
\]
for every $s, t \in [0 , T] $ such that $s \le t $. Therefore by the Kolmogorov-Centsov criterion we conclude that the sequence $\{ \mathcal{S}^n_t (\varphi )  : t \in [0, T ] \}_{ n \in \mathbb{N} } $ is tight with respect to the uniform topology of $C ([0, T ] , \mathbb{R} ) $ and any limit point has $\alpha $-H\"{o}lder continuous trajectories with $\alpha < 1 /2 $.

\subsection{Anti-symmetric part}
Finally we consider the asymmetric part. By the second-order Boltzmann-Gibbs principle (Theorem \ref{BG2}) and stationarity, we have 
\[
\mathbb{E}_n \bigg[ \bigg| \tilde{\mathcal{B}}^n_t (\varphi) - \tilde{\mathcal{B}}^n_s (\varphi) - \int_s^t \sum_{ j \in \mathbb{Z} } \tau_j \mathcal{Q}^n_\rho (\ell ; r ) \nabla^n \varphi^n_j (r ) dr \bigg|^2 \bigg] 
\le C \| \partial_x \varphi \|^2_{ L^2 (\mathbb{R} ) }  \bigg( \frac{ (t- s ) \ell  }{ n } + \frac{ (t- s )^2 n }{ \ell^2 } \bigg) .
\]
On the other hand, by rearranging the sum as $\sum_{j} \tau_j \mathcal{Q} = \sum_{ i = 0, \ldots, \ell -1 } \sum_{ j  } \tau_{ j \ell + i } \mathcal{Q} $ recalling $\tau_j \mathcal{Q}$ and $\tau_k \mathcal{Q}$ are independent if $| j - k | \ge \ell $, we can show by an $L^2$-computation that 
\[
\mathbb{E}_n \bigg[ \bigg| \int_s^t  \sum_{ j \in \mathbb{Z} } \tau_j \mathcal{Q}^n_\rho (\ell ,r ) \nabla^n \varphi^n_j (r ) ds \bigg|^2 \bigg] \le C \| \partial_x \varphi \|^2_{ L^2 (\mathbb{R} ) } \frac{ (t -s )^2 n }{ \ell  } .
\]
Here we used $\mathbb{E}_n [\mathcal{Q}^n_\rho (\ell ; t )^2] = O (\ell^{-2 })$. Now we show tightness. First we consider the case $ t-s \ge 1/n^2 $. Then taking the scaling parameter $\ell $ proportional to $(t -s )^{ 1 / 2 } n $, we obtain  
\[
\mathbb{E}_n \big[ \big| \tilde{ \mathcal{B} }^n_t (\varphi) -  \tilde{ \mathcal{B} }^n_s (\varphi) \big|^2 \big] \le C \| \partial_x \varphi \|^2_{ L^2 (\mathbb{R} ) } (t -s )^{ 3 /2 } . 
\]
Next we shift to a short time regime $t- s \le 1/ n^2 $. Recalling the definition of $\tilde{\mathcal{J}}^n_t $, we have by a direct estimate 
\[
\mathbb{E}_n \big[ \big| \tilde{ \mathcal{B} }^n_t (\varphi) -  \tilde{ \mathcal{B} }^n_s (\varphi) \big|^2 \big] 
\le C \| \partial_x \varphi \|^2_{ L^2 (\mathbb{R} ) } (t -s )^2 n 
\le C \| \partial_x \varphi \|^2_{ L^2 (\mathbb{R} ) } (t -s )^{3 / 2}   . 
\]
Therefore combining the above estimates we conclude that the sequence $\{ \tilde{\mathcal{B}}^n_t (\varphi ) : t \in [0, T] \} $ is tight according to the Kolmogorov-Centsov criterion.

\section{Identification of the limit point}
\label{sec:limitpt}
Again recall the martingale decomposition \eqref{decomposition}. We proved in Section \ref{sec:tightness} that the sequences $\{ \mathcal{X}^n_t : t \in [0, T] \}_{ n \in \mathbb{N} } $, $\{ \mathcal{M}^n_t : t \in [0, T] \}_{ n \in \mathbb{N} } $, $\{ \mathcal{S}^n_t : t \in [0, T] \}_{ n \in \mathbb{N} } $ and $\{ \tilde{ \mathcal{B} }^n_t : t \in [0, T] \}_{ n \in \mathbb{N} } $ are tight in $D ([0, T ] , \mathcal{S}^\prime (\mathbb{R} ) ) $ so that there exist processes $\mathcal{X}$, $\mathcal{M}$, $\mathcal{S}$ and $\tilde{ \mathcal{B} }$ such that 
\[
\begin{aligned}
 \lim_{ n \to \infty } \mathcal{X}^n = \mathcal{X}, \quad 
\lim_{ n \to \infty } \mathcal{M}^n = \mathcal{M}, \quad 
\lim_{ n \to \infty } \mathcal{S}^n = \mathcal{S}, \quad 
\lim_{ n \to \infty } \tilde{ \mathcal{B} }^n = \tilde{ \mathcal{B} } 
\end{aligned}
\]
in distribution along some subsequence that is still denoted by $n $. In the sequel, we characterize the limiting processes.

\subsection{Martingale part}
We decompose the quadratic variation of the martingale part, which is given by \eqref{qv}, as 
\[
\begin{aligned}
\langle \mathcal{M}^n (\varphi ) \rangle_t 
& = \frac{1}{n} \int_0^t \sum_{j \in \mathbb{Z} } g^\prime (0) \rho (\nabla^n \varphi^n_j (s) )^2 ds \\
& \quad + \frac{1}{n} \int_0^t \sum_{j \in \mathbb{Z} } [ \Phi_n (\rho) - g^\prime (0) \rho ] (\nabla^n \varphi^n_j (s) )^2 ds \\
& \quad + \frac{1}{n} \int_0^t \sum_{j \in \mathbb{Z} } [ g_n ( \eta^n_j (s) ) - \Phi_n ( \rho ) ] (\nabla^n \varphi^n_j (s) )^2 ds . 
\end{aligned}
\]
Then it is easy to show that $\langle \mathcal{M}^n (\varphi ) \rangle_t $ converges to $g^\prime (0) \rho \| \partial_x \varphi \|^2_{L^2 (\mathbb{R} ) } t $ in $L^2 (\mathbb{P}_n )$ as $n$ tends to infinity. Then by a similar way as \cite{gonccalves2015stochastic} we can show that the limiting process $\{ \mathcal{M}_t (\varphi) : t \in [0,T] \} $ is a martingale with quadratic variation $g^\prime (0) \rho \| \partial_x \varphi \|^2_{L^2 (\mathbb{R})} t$. 


\subsection{Symmetric part}
Recalling that $g_n $ approaches to a linear function as $n$ tends to infinity, we decompose $\mathcal{S}^n$ as 
\[
\begin{aligned}
\mathcal{S}^n_t (\varphi ) 
= \frac{g^\prime (0) }{2\sqrt{n}} \int_0^t \sum_{j \in \mathbb{Z}} \eta^n_j (s) \Delta^n \varphi^n_j (s)  ds  
+ \frac{1}{2\sqrt{n}} \int^t_0 \sum_{j \in \mathbb{Z} } [g_n (\eta^n_j (s) ) - g^\prime(0) \eta^n_j (s) ] \Delta^n \varphi^n_j (s) ds .
\end{aligned}
\]
The second term converges to zero in $ L^2 (\mathbb{P}_n ) $ as $n$ tends to infinity. Thus the tightness of $\mathcal{X}^n_\cdot $ immediately shows 
\[
\mathcal{S}_t = \frac{ g^\prime (0) }{2} \int_0^t \mathcal{X}_s (\partial_x^2 \varphi ) ds .
\]

\subsection{Anti-symmetric part} 
We are in a position to identify the limit of anti-symmetric part $\mathcal{B}$. Here we define a modified version of the fluctuation field by 
\[
\tilde{ \mathcal{X} }^n_t (\varphi ) = \frac{1}{ \sqrt{n} } \sum_{ j \in \mathbb{Z} } \overline{W}_j (t) \varphi^n_j (t)  
\]
for each $\varphi \in \mathcal{S} (\mathbb{R}) $. Recall the definition of $\mathcal{Q}^n_\rho (\ell ; t ) $ given in \eqref{Q} and the function $\iota_{\varepsilon } (x ; \cdot) : \mathbb{R} \to \mathbb{R} $ defined by $\iota_\varepsilon (x; y ) = \varepsilon^{ - 1 } \mathbf{1}_{ [ x, x + \varepsilon ) } (y) $ for each $x \in \mathbb{R}$. Hereafter we use an abuse of notation to denote the integer part of $\varepsilon n $ by the same notation. Then one can notice $\tilde{ \mathcal{X} }^n_t (\iota_{\varepsilon } ( \frac{ j - f_n t }{ n }  ; \cdot ) ) = \sqrt{n} \overrightarrow{W}^\ell_j (t) $ for each $j \in \mathbb{Z} $ so that 
\[
\sum_{ j \in \mathbb{Z} } \tau_j \mathcal{Q}^n_\rho (\varepsilon n ; t ) \nabla^n \varphi^n_j (t ) 
=  \frac{ g^{ \prime \prime } (0) }{ 2 n } \sum_{ j \in \mathbb{Z} } \big( \tilde{ \mathcal{X} }^n_t ( { \textstyle \iota_{ \varepsilon } ( \frac{ j- f_n t }{ n } ; \cdot ) } ) \big)^2 \nabla^n \varphi^n_j (t ) . 
\]
Then, letting $n $ tends to infinity, we obtain the limit 
\[
\frac{g^{\prime \prime }(0)}{2} \mathcal{A}^\varepsilon_{ s,t } (\varphi ) = \lim_{ n \to \infty } \int_s^t \sum_{ j \in \mathbb{Z} } \tau_j \mathcal{Q}^n_\rho (\varepsilon n ; r ) \nabla^n \varphi^n_j (r ) dr  
\]
in $L^2 (\mathbb{P}_n ) $. Note that such a limiting procedure does not hold immediately since the function $\iota_\varepsilon (x, \cdot) $ is not in $\mathcal{S} (\mathbb{R})$. However, wee can approximate it by functions in $\mathcal{S} (\mathbb{R})$ and we can justify the convergence (see \cite{gonccalves2014nonlinear} for detail). 

By Theorem \ref{BG2}, we have
\[
\mathbb{E}_n \bigg[ \bigg| \tilde{ \mathcal{B} }_t (\varphi ) - \tilde{\mathcal{B}}_s (\varphi ) - \int_s^t \sum_{ j \in \mathbb{Z} } \tau_j \mathcal{Q}^n_\rho (\ell, r ) \nabla^n \varphi^n_j (r) dr \bigg|^2 \bigg] 
\le C \| \partial_x \varphi \|^2_{ L^2 (\mathbb{R} ) }\bigg( \frac{ (t -s ) \ell }{ n } + \frac{ (t-s )^2 n }{ \ell^2 }  \bigg) .
\] 
Now taking $\ell = \varepsilon n $ and then letting $n \to \infty $, we obtain 
\begin{equation}
\label{B-A}
\mathbb{E}_n \bigg[ \bigg|  \tilde{ \mathcal{B} }_t (\varphi ) - \tilde{\mathcal{B}}_s (\varphi ) - \frac{g^{\prime \prime}(0)}{2}\mathcal{A}^\varepsilon_{ s , t } (\varphi ) \bigg|^2 \bigg] \le C  \varepsilon (t - s ) \| \partial_x \varphi \|^2_{ L^2 (\mathbb{R} ) } ,
\end{equation}
from which we get the condition \textbf{(EC2)} with the help of the triangle inequality. Hence Proposition \ref{nonlinear} bring us the existence of the limit $\mathcal{A}_t (\varphi ) = \lim_{ \varepsilon \to 0 } \mathcal{A}^\varepsilon_{0, t } (\varphi )  $ for each $\varphi \in \mathcal{S} (\mathbb{R} ) $. In addition, the estimate \eqref{B-A} assures $\tilde{ \mathcal{B} } = (g^{\prime \prime}(0)/2) \mathcal{A}$.

Moreover, we show that also the condition \textbf{(EC1)} holds true. For that purpose, it suffices to check that 
\[
\mathbb{E}_n \bigg[ \bigg| \int_0^t \tilde{ \mathcal{ X } }^n_s (\partial_x^2 \varphi ) ds \bigg|^2 \bigg] \le C t \| \partial_x^2 \varphi \|^2_{ L^2 (\mathbb{R}) } .
\]
Furthermore, according to summation by parts and smoothness of each test function $\varphi $, it is enough to verify that  
\[
\mathbb{E}_n \bigg[ \bigg| \int_0^t \sqrt{n } \sum_{ j \in \mathbb{Z} } (\overline{W}_{j - 1 } (s ) - \overline{W}_j (s) ) \nabla^n \varphi^n_j (s) ds \bigg|^2 \bigg] \le C t \| \partial_x \varphi \|^2_{ L^2 (\mathbb{R}) } .
\] 
This follows by and $H^{ - 1 , n }  $-estimate as we conducted in Section \ref{sec:BG2}. Indeed, by an integration-by-parts formula given in Lemma \ref{IBP}, we have 
\[
\begin{aligned}
2 \bigg\langle \sqrt{n } \sum_{ j \in \mathbb{Z} } (\overline{W}_{j - 1 } - \overline{W}_j ) \nabla^n \varphi^n_j  , f \bigg\rangle_{ L^2 (\nu^n_\rho ) } 
= -  2 \sqrt{n} E_{ \nu^n_\rho } \bigg[ \sum_{ j\in \mathbb{Z} } W_{ j - 1 } \nabla^n \varphi^n_j \nabla_{ j-1 , j  } f  \bigg] .
\end{aligned}
\]
By Young's inequality, recalling the definition of $W_j$, the last display is absolutely bounded above by 
\[
\frac{n^2}{2} \sum_{ j \in \mathbb{Z} } E_{ \nu^n_\rho }  [ g_n (\eta_j) (\nabla_{ j , j + 1 } f )^2 ] 
+ \frac{2}{g^\prime (0) n}\sum_{ j \in \mathbb{Z} } E_{ \nu^n_\rho }  [ W_{ j - 1 }  (\nabla^n \varphi^n_j  )^2 ]  .
\]
The first term is bounded by  $\|f \|_{ 1 , n } $ so that we conclude that the condition \textbf{(EC1)} holds true by the Kipnis-Varadhan inequality. 

Finally we note that all the above estimates hold also for the reversed process $\{ \mathcal{X}^n_{T-t} : t \in [0, T] \}$ and thus condition (3) of Definition \ref{def:energysol} is satisfied. By this line it is prove that the limiting process $\mathcal{X} $ is the energy solution of the stochastic Burgers equation \eqref{SBEthm}.

\section{Proof of Lemma \ref{antisymm}}
\label{sec:expansion}
Finally in this section we give the proof of Lemma \ref{antisymm} by showing how to take the framing in a suitable way. Recall the definition of $W_j $ given in \eqref{eq:wdef}. Then by a Taylor expansion we have that
\[
W_j = \eta_j + \frac{ g^{\prime \prime} (0 ) }{ 2 g^\prime (0 ) } \frac{ \eta_j^2 }{ \sqrt{n } } + \frac{ g^{ (3) } (0) }{ 6 g^\prime (0) } \frac{ \eta_j^3 }{ n } + O (n^{ - 3 /2 } ) . 
\]
By Assumption \ref{ass:regularity}, a direct $L^2$ computation enables us to estimate the reminder term $O(n^{-3/2})$ as follows. 
\[
\mathbb{E}_n \bigg[ \sup_{ 0 \le t \le T } \bigg|  \sqrt{n} \int_0^t \sum_{ j \in \mathbb{Z} } ( O(n^{ - 3 /2 }) - E_{ \nu^n_\rho } [ O (n^{-3/2} ) ] ) \nabla^n \varphi^n_j (s) ds \bigg|^2 \bigg] \le C \frac{T^2}{n} \| \partial_x \varphi \|^2_{ L^2 (\mathbb{R} ) } ,
\] 
which vanishes as $n $ tend to infinity. In particular, the reminder term may not be concerned. In the sequel, we write the difference $W_j - \eta_j $ in terms of the variables $W_j$'s. For that purpose, note that
\[
\frac{a}{\sqrt{n}} W_j \eta_j + \frac{b}{n} W_j \eta_j^2 
= a \frac{\eta_j^2}{\sqrt{n}} + \bigg( a \frac{ g^{ \prime \prime } (0 ) }{ 2 g^\prime (0 ) } +  b  \bigg) \frac{\eta_j^3}{n} + O (n^{-3/2}) .
\] 
We take the constants $a$ and $b$ in order that leading terms in the right-hand side of the above display coincide with those of $W_j - \eta_j $. Then we have 
\[
W_j - \eta_j = \frac{ g^{\prime \prime } (0) }{ 2 g^\prime (0) } \frac{ W_j \eta_j }{ \sqrt{n } } 
+ \bigg( \frac{ g^{ (3) } (0 ) }{ 6 g^\prime (0) } - \frac{ g^{ \prime \prime } (0 )^2 }{ 4  g^\prime (0)^2 } \bigg) \frac{ W_j \eta_j^2 }{ n } + O (n^{ - 3/2 }) .  
\]
Next we notice the following identities.
\[
\begin{aligned}
& \tilde{L}_n ( \eta_j^2 - \eta_j ) = 2 ( W_{ j -1 } - W_j ) \eta_j + 2  W_j  , \\
& \tilde{L}_n \big( \eta_j^3 + \frac{3}{2} \eta_j^2 + \frac{ 5 }{ 2 } \eta_j  \big) = 3  (W_{ j -1 } - W_j ) \eta_j^2 + 6 W_{ j - 1 } \eta_j + 3 W_j  
\end{aligned} 
\]
where $\tilde{L}_n = ( n^2 g^\prime (0 ) )^{ -1} L_n $. Here we have the following result. 

\begin{lemma}
\label{lem:range}
Let $G$ be a local function such that $E_{\nu^n_\rho } [g(\eta_j ) G(\eta)] $ is finite for any $j \in \mathbb{Z} $ and we write $G_j = \tau_j G$. Then we have 
\[
\mathbb{E}_n \bigg[ \sup_{0 \le t \le T } \bigg| \int_0^t \sum_{ j \in \mathbb{Z} } n^{-2} L_n G_j (\eta^n (s) ) \nabla^n \varphi^n_j (s) ds \bigg|^2 \bigg] \le \frac{C}{n^2} \int_0^T \sum_{j \in \mathbb{Z}} ( \nabla^n \varphi^n_j (s) )^2 ds .
\]
\end{lemma}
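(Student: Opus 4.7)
The plan is to apply Dynkin's formula to the time-dependent functional $H(s, \eta) := \sum_{j \in \mathbb{Z}} G_j(\eta) \nabla^n \varphi^n_j(s)$, which serves as an antiderivative of the integrand. Since the coefficients $\nabla^n \varphi^n_j(s)$ are deterministic in $\eta$, one has $L_n H(s, \cdot)(\eta) = \sum_j \nabla^n \varphi^n_j(s) L_n G_j(\eta)$, so Dynkin's formula applied to $H$ yields
\[
\int_0^t \sum_j n^{-2} L_n G_j(\eta^n(s)) \nabla^n \varphi^n_j(s) ds = n^{-2}\bigg[ H(t, \eta^n(t)) - H(0, \eta^n(0)) - \int_0^t \partial_s H(s, \eta^n(s)) ds - M_t \bigg],
\]
where $M = \{M_t\}$ is a mean-zero martingale with quadratic variation $\langle M\rangle_t = \int_0^t n^2 \sum_k g_n(\eta^n_k(s))(\nabla_{k,k+1} H(s, \cdot))^2 ds$. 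Thus it suffices to bound the $L^2$-norm of each of the four terms on the right-hand side after multiplication by $n^{-2}$.

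The martingale term produces the desired bound. Since $\nabla_{k,k+1} H(s, \cdot) = \sum_{j : |j - k| \le r} \nabla^n \varphi^n_j(s) \nabla_{k,k+1} G_j$ is a finite-range sum (where $r$ is the range of $G$), Cauchy-Schwarz together with the moment hypothesis $E_{\nu^n_\rho}[g(\eta_j) G] < \infty$ gives $\mathbb{E}_n[\langle M\rangle_T] \le C n^2 \int_0^T \sum_j (\nabla^n \varphi^n_j(s))^2 ds$ by stationarity. Doob's inequality then yields $n^{-4}\mathbb{E}_n[\sup_{0 \le t \le T} M_t^2] \le (C/n^2) \int_0^T \sum_j (\nabla^n \varphi^n_j(s))^2 ds$, which is exactly the target estimate. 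For the boundary contributions $n^{-2} H(t, \eta^n(t))$ and $n^{-2} H(0, \eta^n(0))$, the shift invariance of $\nu^n_\rho$ together with the telescoping identity $\sum_j \nabla^n \varphi^n_j = 0$ lets us center $G_j$ by $E_{\nu^n_\rho}[G]$, and finite-range correlations then give $E_{\nu^n_\rho}[H(t, \cdot)^2] \le C \sum_j (\nabla^n \varphi^n_j(t))^2$, so these two contributions are of order $n^{-3}$ and negligible relative to the target.

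The main obstacle is the time-derivative term, because the chain rule yields $\partial_s \nabla^n \varphi^n_j(s) = -(f_n/n) \nabla^n (\varphi')^n_j(s)$, which is of order $n$ since $f_n = O(n^2)$. A direct Cauchy-Schwarz in time combined with stationarity nevertheless bounds $n^{-4} \mathbb{E}_n[(\int_0^T |\partial_s H(s, \eta^n(s))| ds)^2]$ by $C T^2 / n$, where the constant $C$ depends on higher derivatives of $\varphi$ through $\|\varphi''\|_\infty$. Since the target bound is itself of order $T/n$ for Schwartz $\varphi$ (the integral $\int_0^T \sum_j (\nabla^n \varphi^n_j(s))^2 ds$ being $O(Tn)$), this estimate is compatible with the stated bound, at the price of allowing the constant $C$ in the lemma to depend on $\varphi$, which is harmless in all applications.
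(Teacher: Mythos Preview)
Your approach via Dynkin's formula is genuinely different from the paper's, which instead bounds the $H^{-1,n}$-norm of $F(t,\cdot)=\sum_j n^{-2} L_n G_j \,\nabla^n\varphi^n_j(t)$ and then invokes the Kipnis--Varadhan inequality (Proposition~\ref{KV}). Concretely, the paper uses duality to write $\langle F, f\rangle_{L^2(\nu^n_\rho)} = \langle \sum_j G_j \nabla^n\varphi^n_j,\, n^{-2} L_n^* f\rangle$, expands $n^{-2} L_n^* f = \sum_k g_n(\eta_k)\nabla_{k,k-1} f$, restricts the $k$-sum to the finite support of $G_j$, and applies Young's inequality to split off $\|f\|_{1,n}^2$; the remaining piece is exactly the right-hand side of the lemma, and Kipnis--Varadhan then delivers the supremum over $t$ automatically.

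Your argument has a gap at precisely this point. You bound $E_{\nu^n_\rho}[H(t,\cdot)^2]$ for each \emph{fixed} $t$, but the lemma requires control of $\mathbb{E}_n[\sup_{0\le t\le T}|\,\cdot\,|^2]$, and the boundary term $n^{-2}H(t,\eta^n(t))$ must therefore be bounded uniformly in $t$. Because the frame velocity is $f_n/n\sim n$, the window where $\nabla^n\varphi^n_j(t)$ is non-negligible sweeps over order $n^2 T$ distinct sites as $t$ ranges over $[0,T]$, so stationarity alone does not convert your fixed-time second-moment bound into a uniform one. This can be repaired (for instance via higher moments of $H$ together with a union bound over a fine time grid), but it is not free and requires hypotheses beyond those stated; this is exactly the difficulty the Kipnis--Varadhan machinery is designed to bypass. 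A secondary point: your treatment of the $\partial_s H$ term produces a bound $CT^2/n$ with $C$ depending on $\|\varphi''\|$, so you do not quite recover the stated inequality (whose right-hand side scales like $T/n$ with $C$ independent of $\varphi$); as you note this is harmless for the applications, but it is a genuine weakening of the lemma.
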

\begin{proof}
By definition of the adjoint operator $L^*_n $, for any local $L^2 (\nu^n_\rho ) $ funciton $f$, we have that 
\[
\begin{aligned}
 \bigg\langle \sum_{j \in \mathbb{Z}} n^{-2} L_n G_j \nabla^n \varphi^n_j , f \bigg\rangle_{L^2(\nu^n_\rho)} 
& = \sum_{j \in \mathbb{Z}} \langle G_j \nabla^n \varphi^n_j, n^{-2} L_n^* f \rangle_{L^2(\nu^n_\rho)} \\
& = \sum_{\substack{ j, k \in \mathbb{Z}, \\ k \in \mathrm{supp}G_j \cup (\mathrm{supp}G_j +1 ) } } E_{ \nu^n_\rho } [ G_j (\eta ) \nabla^n \varphi^n_j g(\eta_k ) \nabla_{ k,k- 1 } f (\eta)] .
\end{aligned}
\]
However, by Young's inequality, twice the last display is bounded by 
\[
\begin{aligned}
& 2n^{-2} (| \mathrm{supp}G_j | + 1 ) \sum_{\substack{ j, k \in \mathbb{Z}, \\ k \in \mathrm{supp}G_j \cup (\mathrm{supp}G_j+1 ) } } E_{ \nu^n_\rho } [ g(\eta_k ) G_j (\eta )^2 ](\nabla^n \varphi^n_j)^2 \\ 
& \quad + \frac{n^2}{2} \sum_{ k \in \mathbb{Z} } E_{ \nu^n_\rho } [ g(\eta_k ) ( \nabla_{ k,k- 1 } f (\eta) )^2 ] .
\end{aligned}
\]
Since the second term is bounded by $\| f \|^2_{1, n } $, we complete the proof by using the Kipnis-Varadhan inequality. 
\end{proof}
Lemma \ref{lem:range} assures that quantity contained in the range of $\tilde{L}_n $ is small and thus we can replace $W_{ j } \eta_j  $ and $W_j \eta_j^2 $ in terms of $W_{ j -1 } \eta_j $ and $W_{j -1 } \eta_j^2 $ without any trouble. Moreover, again by a Taylor expansion, we have 
\[
\frac{ W_{ j - 1 } W_j }{ \sqrt{n} } = \frac{ W_{ j -1 } \eta_j }{ \sqrt{n} }
+ \frac{ g^{\prime \prime } (0) }{ 2 g^\prime ( 0 ) } \frac{ W_{ j -1 } \eta_j^2  }{ n } 
+ O ( n^{ - 3/2 } ) .
\]
Hence by this line we obtain  
\begin{equation}
\label{eq:expansion1}
\begin{aligned}
W_j - \eta_j 
=& \bigg( \frac{ g^{\prime \prime} (0) }{ 2 g^\prime (0) \sqrt{ n } } 
+ \frac{ g^{ (3 ) } (0) }{ 3 g^\prime (0) n } 
- \frac{ g^{\prime \prime} (0)^2 }{ 2 g^\prime (0)^2 n } \bigg) W_{ j - 1 } W_j  
+ \bigg( \frac{ g^{\prime \prime} (0) }{ 2 g^\prime (0) \sqrt{ n } } 
+ \frac{ g^{ (3 ) } (0) }{ 6 g^\prime (0) n } 
- \frac{ g^{\prime \prime} (0)^2 }{ 4 g^\prime (0)^2 n } \bigg) W_j \\
& + \bigg( \frac{ g^{ (3 ) } (0) }{ 6 g^\prime (0) n } 
- \frac{ 3 g^{\prime \prime} (0)^2 }{ 4 g^\prime (0)^2 n } \bigg) W_{j - 1 } \eta_j^2  
+ R_j 
\end{aligned}
\end{equation}
where $R_j $ is a negligible error term. Finally we just need to estimate the order-three term. Since the coefficient of $W_{j-1} \eta^2_j$ in the above identity has order $O(n^{-1})$, we can replace $ \eta_j $ by $ W_j $ by using the same identity. To calculate further, we use identities 
\[
\begin{aligned}
\tilde{L}_n (\eta_{ j - 1 } \eta_j \eta_{ j +  1} ) 
& =  W_{ j -2} W_j W_{ j + 1 } + W_{ j - 1 }^2 W_{ j + 1} +W_{ j - 1} W_j^2 -3 W_{ j -1 }W_j W_{ j + 1 } \\
& \quad  - W_{ j - 1 } W_{ j } - W_{ j - 1 } W_{ j + 1} + E^{(1)}_{ j } , \\
\tilde{L}_n (\eta_{ j - 1 }^2 \eta_{ j +  1} ) 
& =  2 W_{ j -2} W_{j - 1 } W_{ j + 1 }  + W_{ j - 1}^2 W_j - 3 W_{ j - 1 }^2 W_{ j + 1}   \\
& \quad + W_{ j - 2 } W_{ j + 1 } + W_{ j - 1 } W_{j + 1 } + E^{(2)}_{ j } , \\
\tilde{L}_n (\eta_{ j - 1 }^2  \eta_j  ) 
& = 2 W_{ j -2} W_{j - 1 } W_{ j } - 3 W_{ j - 1 }^2 W_{ j } + W_{ j - 1}^3  \\
& \quad + W_{j-2} W_{j} + W_{ j - 1 } W_{ j} - 2 W_{j-1}^2  + W_{j-1} + E^{(3)}_{ j } ,\\
\tilde{L}_n \eta_{ j  }^3 
& = 3 W_{ j -1 } W_{ j }^2 - 3 W_j^3 \\
& \quad + 3 W_{ j -1 } W_j  
 + 3 W_{ j }^2 + W_{j-1}- W_j + E^{( 4 ) }_{ j } 
\end{aligned}
\]
with some remainder terms $E^{(k)}_j $, which do not affect the limit. Here note that we can shift index of each term since for any local function $G$ we have that 
\[
\begin{aligned}
& \mathbb{E}_n \bigg[ \sup_{0\le t \le T } \bigg|
\frac{1}{\sqrt{n}} \int_0^t \sum_{j \in \mathbb{Z}} (\tau_j G -\tau_{j-1}G)(s)\nabla^n \varphi^n_j(s) ds \bigg|^2 \bigg] \\
& \quad = \mathbb{E}_n \bigg[ \sup_{0\le t \le T } \bigg|
\frac{1}{\sqrt{n}} \int_0^t \sum_{j \in \mathbb{Z}} \tau_j G(s) (\nabla^n \varphi^n_j - \nabla^n \varphi^n_{j+1})(s) ds \bigg|^2 \bigg]
\le C \frac{T^2}{n} E_{\nu^n_\rho}[G^2] \| \partial_x^2 \varphi \|^2_{L^2(\mathbb{R})} 
\end{aligned}
\]
where in the last inequality we used Schwarz's inequality. Moreover for order-two terms, we have that
\[
\begin{aligned}
\tilde{L}_n (\eta_{ j - 1 } \eta_j ) 
& =  W_{ j -2 } W_{j} +W_{j-1}^2 - 2W_{ j - 1 } W_j - W_{j-1}  + E^{(5)}_{ j } , \\
\tilde{L}_n (\eta_{ j - 1 } \eta_{ j +  1} ) 
& = W_{ j -2} W_{ j + 1 } - 2 W_{ j - 1 } W_{ j + 1} + W_{ j - 1} W_j  + E^{(6)}_{ j } , \\
\tilde{L}_n \eta_j^2  
& = 2 W_{ j - 1} W_j - 2 W_j^2 + W_{j-1} + W_j + E^{(7)}_{ j } .
\end{aligned}
\]
From these identities and shifting indices, we may replace $W_{j-2}W_{j}$, $W_{j-2}W_{j+1}$ and $W_{j-1}W_{j+1}$ by $W_{j-1} W_j$, and $W_{j-1}^2$ and $W_j^2$ by $W_{j-1}W_j + W_j$, respectively. As summary, we can represent the order-three term $W_{ j - 1 } W_j^2 $ as 
\begin{equation}
\label{eq:expansion2}
\begin{aligned}
10 W_{ j - 1 } W_j^2 = 
& - 2 W_{ j - 2 } W_{ j - 1 } W_j - 6 W_{ j - 2 } W_{ j - 1 } W_{ j + 1 } - 9 W_{ j - 2 } W_{ j } W_{ j + 1 } + 27 W_{ j - 1 } W_j W_{ j + 1 } \\
&  + 10 W_{ j - 1 } W_{ j } + E_j^{(8)} . 
\end{aligned}
\end{equation}
Hence from now on, we are concerned with order-three term with distinct indices. For that purpose, we first show the following estimate. 

\begin{lemma}
\label{lem:order3}
We have that
\[
\begin{aligned}
& \mathbb{E}_n \bigg[ \sup_{0 \le t \le T} \bigg| \frac{1}{\sqrt{n}} \int_0^t \sum_{j \in \mathbb{Z} }
\big\{ \overline{W}_{j-1}(s) \overline{W}_j(s) \overline{W}_{j+1}(s) 
- (\overrightarrow{W}^\ell_{j+1}(s) )^3
\big\} \nabla^n \varphi^n_j (s) ds \bigg|^2 \bigg] \\
& \quad \le C \bigg( \frac{T \ell}{n^2} + \frac{T^2}{\ell^2} \bigg) \| \partial_x \varphi \|^2_{L^2(\mathbb{R})} . 
\end{aligned}
\]
\end{lemma}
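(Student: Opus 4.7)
My plan parallels the template of Lemmas \ref{oneblock} and \ref{BGmain}, adapted to a cubic product. The first step is to peel off one $\overline{W}$-factor at a time,
\begin{align*}
&\overline{W}_{j-1}\overline{W}_j\overline{W}_{j+1} - (\overrightarrow{W}^\ell_{j+1})^3 \\
&\quad = (\overline{W}_{j-1}-\overrightarrow{W}^\ell_{j+1})\overline{W}_j\overline{W}_{j+1} + \overrightarrow{W}^\ell_{j+1}(\overline{W}_j-\overrightarrow{W}^\ell_{j+1})\overline{W}_{j+1} + (\overrightarrow{W}^\ell_{j+1})^2(\overline{W}_{j+1}-\overrightarrow{W}^\ell_{j+1}),
\end{align*}
and then to write each difference $\overline{W}_k - \overrightarrow{W}^\ell_{j+1}$ as a weighted telescoping sum $\ell^{-1}\sum (W_m - W_{m+1})$ over indices $m$ in a window of length $O(\ell)$ near $j$, exactly as in the opening of Lemma \ref{oneblock}.

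Next I would apply the Kipnis-Varadhan inequality (Proposition \ref{KV}) to each of the three pieces and pair the integrand with an arbitrary local $L^2(\nu^n_\rho)$-function $f$ in the variational formula for the $H^{-1,n}$-norm. The integration-by-parts identity (Lemma \ref{IBP}) converts each telescoping factor $W_m - W_{m+1}$ into a genuine gradient $\nabla_{m,m+1} f$ paired with $W_m$ and a shift of the remaining quadratic factor. A standard Young's inequality split absorbs the gradient part into $\|f\|^2_{1,n}$, while the other side yields an $L^2(\nu^n_\rho)$-bound on the quadratic remainder with a $1/n^2$ coefficient. Using the independence of the $W_k$'s under $\nu^n_\rho$ to control the relevant second moments, and noting that each telescoping sum carries $O(\ell)$ non-trivial terms (divided once by the $1/\ell$ in $\overrightarrow{W}^\ell_{j+1}$), one obtains an $H^{-1,n}$-bound of order $(\ell/n)\sum_j (\nabla^n\varphi^n_j)^2$. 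Combined with the outer factor $1/n$ coming from $1/(\sqrt{n})^2$, this produces the $(T\ell/n^2)\|\partial_x\varphi\|^2_{L^2(\mathbb{R})}$ contribution.

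A second batch of terms arises from the Leibniz rule when $\nabla_{m,m+1}$ acts on the remaining quadratic factor rather than on $f$; these are direct analogs of the surviving $\sigma^2_n(\rho)/\ell$ residual in Lemma \ref{BGmain}. I would treat them by the same two-step device: rewrite algebraically to isolate centered squares $W_m^2 - E_{\nu^n_\rho}[W_m^2]$ and cross products, and then split $\sum_j$ into $\ell$ residue classes modulo $\ell$ to exploit the fact that $\overrightarrow{W}^\ell_j$ and $\overrightarrow{W}^\ell_k$ are independent once $|j-k|\ge \ell$. A Cauchy-Schwarz in time, together with the moment bounds $\mathbb{E}_n[(\overrightarrow{W}^\ell_j)^p] = O(\ell^{-p/2})$ coming from the iid structure of the $W_k$'s under $\nu^n_\rho$, then yields the $(T^2/\ell^2)\|\partial_x\varphi\|^2_{L^2(\mathbb{R})}$ contribution.

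The main obstacle is the combinatorial bookkeeping of Leibniz cross-terms in the cubic setting. Because the product $\overline{W}_{j-1}\overline{W}_j\overline{W}_{j+1}$ has three overlapping factors, the gradient $\nabla_{m,m+1}$ can land on any of them, and the resulting residuals are themselves quadratic or cubic in the $W_k$'s; one must verify that every such residual either fits the Kipnis-Varadhan gradient bound of size $\ell/n^2$ or admits the independence-based $L^2$ estimate of size $T/\ell^2$, with no extraneous term of worse order surviving. Each residual on its own is manageable by recycling the Lemma \ref{BGmain} machinery, but the taxonomy of cases must be set up with care.
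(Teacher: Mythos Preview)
Your peeling decomposition differs from the paper's, and the difference is not cosmetic. The paper writes
\[
\overline{W}_{j-1}\overline{W}_j\overline{W}_{j+1} - (\overrightarrow{W}^\ell_{j+1})^3
= \overline{W}_{j-1}\Bigl(\overline{W}_j\overline{W}_{j+1} - (\overrightarrow{W}^\ell_{j+1})^2 + \tfrac{\sigma^2_n(\rho)}{\ell}\Bigr)
+ (\overrightarrow{W}^\ell_{j+1})^2(\overline{W}_{j-1}-\overrightarrow{W}^\ell_j)
+ (\overrightarrow{W}^\ell_{j+1})^2(\overrightarrow{W}^\ell_j-\overrightarrow{W}^\ell_{j+1}).
\]
The first bracket is exactly the (shifted) second-order Boltzmann--Gibbs quantity of Theorem~\ref{BG2}, and the outside factor $\overline{W}_{j-1}$ sits strictly to the left of every variable in the bracket; it is therefore independent under $\nu^n_\rho$ and passes through the entire machinery of Lemmas~\ref{oneblock}--\ref{BGmain} as a harmless bounded coefficient, giving the full $T\ell/n^2 + T^2/\ell^2$ bound in one stroke. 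The remaining two pieces carry the spectator $(\overrightarrow{W}^\ell_{j+1})^2$, whose second moment is $O(\ell^{-1})$; this built-in decay is precisely what makes the Leibniz residuals (when $\nabla_{m,m+1}$ hits the spectator) controllable.

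In your decomposition the spectators in the first two pieces are $\overline{W}_j\overline{W}_{j+1}$ and $\overrightarrow{W}^\ell_{j+1}\overline{W}_{j+1}$, both of which overlap the telescoping window. Trace through your first piece: after one integration by parts on the boundary increment $(W_{j-1}-W_j)$, the Leibniz correction contains a term of the type $W_{j-1}(W^+_j-W_j)\overline{W}_{j+1}$ paired against $f$, with weight of order one (the boundary weights in the telescoping of $\overline{W}_{j-1}-\overrightarrow{W}^\ell_{j+1}$ are $1$, not $O(\ell^{-1})$). Following the algebra of Lemma~\ref{BGmain} one step further leaves a residual of the form $W_j(W_j-W_{j-1})\overline{W}_{j+1}$, which has neither a gradient of $f$ nor any $\ell^{-1}$ factor. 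In Lemma~\ref{BGmain} the analogous residual (the second term of \eqref{i0}) comes with an explicit $\ell^{-1}$ and is then dispatched by a direct $L^2$ bound yielding $T^2/\ell^2$; without that $\ell^{-1}$ you get only $O(T^2)$, which is useless. Your appeal to ``recycling the Lemma~\ref{BGmain} machinery'' does not close here. Either adopt the paper's decomposition, whose whole point is to make the first spectator independent of the block and the later spectators carry $\ell^{-1}$ decay, or supply a concrete argument for these $O(1)$ boundary residuals.
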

\begin{proof}
We use the decomposition
\[
\begin{aligned}
& \overline{W}_{j-1} \overline{W}_j \overline{W}_{j+1} 
- (\overrightarrow{W}^\ell_{j+1} )^3 \\
& \quad = 
\overline{W}_{j-1} \bigg( \overline{W}_j \overline{W}_{j+1} - (\overrightarrow{W}^\ell_{j+1})^2 + \frac{\sigma_n^2(\rho) }{\ell} \bigg) \\
& \qquad + (\overrightarrow{W}_{j+1}^\ell)^2 (\overline{W}_{j-1} - \overrightarrow{W}^\ell_j) 
+ (\overrightarrow{W}_{j+1}^\ell)^2 (\overrightarrow{W}_j - \overrightarrow{W}^\ell_{j+1}) .
\end{aligned}
\]
For the first term in the right hand side, noting $\overline{W}_{j-1}$ is independent of the quantity inside the parenthesis, we can conduct the same procedure as Theorem \ref{BG2}, which gives the bound $C (T \ell n^{-2} + T^2 \ell^{-2}) \| \partial_x \varphi \|^2_{L^2(\mathbb{R})}$. In addition, the third term can be treated by a direct $L^2$-estimate and we obtain the bound $CT^2\ell^{-2} \| \partial_x \varphi \|^2_{L^2(\mathbb{R})}$. For the second term, note that  
\[
(\overrightarrow{W}_{j+1}^\ell)^2 (\overline{W}_{j-1} - \overrightarrow{W}^\ell_j)  
= (\overrightarrow{W}_{j+1}^\ell)^2 
\sum_{i=0}^{\ell-1} \psi_i (W_{j+i-1} - W_{j+i}) 
\]
where $\psi_i = (\ell-i)/\ell$. Then the integration-by-parts formula (Lemma \ref{IBP}) yields 
\[
\begin{aligned}
& E_{\nu^n_\rho} [f(\overrightarrow{W}_{j+1}^\ell)^2 (W_{j+i-1} - W_{j+i})] \\
& \quad = - E_{\nu^n_\rho} [\sigma_{j+i-1, j+i}(\overrightarrow{W}_{j+1}^\ell)^2
(\nabla_{j+i-1, j+i} f) W_{j+i-1}] 
- E_{\nu^n_\rho} [f (\nabla_{j+i-1, j+i}(\overrightarrow{W}_{j+1}^\ell)^2) W_{j+i-1}] 
\end{aligned}
\]
for any local function $f$. However, by Young's inequality, we have that 
\begin{equation}
\label{eq:estimateorder3}
\begin{aligned}
& 2 \bigg\langle \sum_{j \in \mathbb{Z} } 
\sum_{i=0}^{\ell-1} F^\ell_{i,j } \psi_i \nabla^n \varphi^n_j 
, f \bigg\rangle_{L^2(\nu^n_\rho)} \\
& \quad \le
\frac{n^2 g^\prime(0)}{2\ell}\sum_{j \in \mathbb{Z}} \sum_{i=0}^{\ell-1} E_{\nu^n_\rho} [W_{j+i-1} (\nabla_{j+i-1, j+i}f)^2] \\
& \qquad 
+ \frac{2 \ell}{n^2 g^\prime(0)} \sum_{j \in \mathbb{Z}} \sum_{i=0}^{\ell-1} 
E_{\nu^n_\rho}[W_{j+i-1} \big(\sigma_{j+i-1, j+i}(\overrightarrow{W}_{j+1}^\ell)^2\big)^2] (\nabla^n \varphi^n_j )^2
\end{aligned}
\end{equation}
where we set 
\[
F^\ell_{i,j}
= (\overrightarrow{W}_{j+1}^\ell)^2 (W_{j+i-1} - W_{j+i})
+ (\nabla_{j+i-1, j+i}(\overrightarrow{W}_{j+1}^\ell)^2) W_{j+i-1} .
\]
Since the first term in the right hand side of \eqref{eq:estimateorder3} is bounded by $\| f \|^2_{1,n }$ and
\[
E_{\nu^n_\rho}[W_{j+i-1} \big(\sigma_{j+i-1, j+i}(\overrightarrow{W}_{j+1}^\ell)^2\big)^2] = O(\ell^{-2}) ,
\]
the Kipnis-Varadhan inequality yields 
\[
\mathbb{E}_n \bigg[ \sup_{0 \le t \le T} \bigg| \frac{1}{\sqrt{n}} \int_0^t \sum_{j \in \mathbb{Z}} \sum_{i=0}^{\ell-1} F^\ell_{i,j}(s) \psi_i \nabla^n \varphi^n_j (s) ds \bigg|^2 \bigg]
\le C \frac{T }{n^2} \| \partial_x \varphi \|^2_{L^2(\mathbb{R})}. 
\]
Therefore, it remains to estimate 
\[
\mathbb{E}_n \bigg[ \sup_{0 \le t \le T} \bigg| \frac{1}{\sqrt{n}} \int_0^t \sum_{j \in \mathbb{Z}} \sum_{i=0}^{\ell-1} 
(\nabla_{j+i-1, j+i}(\overrightarrow{W}_{j+1}^\ell(s))^2) W_{j+i-1}(s) \psi_i \nabla^n \varphi^n_j (s) ds \bigg|^2 \bigg] .
\]
Here we note that 
\[
\begin{aligned}
 \nabla_{j+i-1, j+i} (\overrightarrow{W}^\ell_{j+1})^2 
= \ell^{-1} \overrightarrow{W}^\ell_{j+1} 
\nabla_{j+i-1, j+i} Z_{i,j}  
- \ell^{-2} (\nabla_{j+i-1, j+i} Z_{i,j} )^2 
\end{aligned}
\]
provided $\eta_{j+i-1} > 0$ where $Z_{i,j} = W_{j+i-1}\mathbf{1}_{i \ge 1} + W_{j+i} \mathbf{1}_{i \ge 0}$. 
The second term gives small factor by a direct $L^2$ computation so that we may consider only the first term. Moreover, a change of variable yields 
\[
\begin{aligned}
&  E_{\nu^n_\rho} [ f \overrightarrow{W}^\ell_{j+1} \nabla_{j+i-1, j+i} Z_{i,j} W_{j+i-1} ] \\
& \quad  
= E_{\nu^n_\rho} [ \nabla_{j+i-1, j+i} (f  \overrightarrow{W}^\ell_{j+1} ) Z_{i,j} W_{j+i} ] 
- E_{\nu^n_\rho} [ f \overrightarrow{W}^\ell_{j+1} ( W_{j+i-1} - W_{j+i} ) Z_{i,j} ] \\
& \quad
= E_{\nu^n_\rho} [ (\nabla_{j+i-1, j+i} f) (\sigma_{j+i-1, j+i} \overrightarrow{W}^\ell_{j+1} ) Z_{i,j} W_{j+i} ]
- E_{\nu^n_\rho} [ f (\nabla_{j+i-1, j+i}  \overrightarrow{W}^\ell_{j+1} ) Z_{i,j} W_{j+i} ] \\
& \qquad
- E_{\nu^n_\rho} [ f \overrightarrow{W}^\ell_{j+1}(W_{j+i-1}-W_{j+i}) Z_{i,j}] .
\end{aligned}
\]
The first two terms can be estimated by the Kipnis-Varadhan inequality as the above and thus we may only consider the last term in the right-hand side. However, note that  
\[
\begin{aligned}
& \frac{1}{\ell} \sum_{i=0}^{\ell-1} \overrightarrow{W}^\ell_{j+1}Z_{i,j}(W_{j+i-1}-W_{j+i}) \psi_i \\
& \quad = \frac{1}{\ell} \overrightarrow{W}^\ell_{j+1} (W^2_{j-1} - E_{\nu^n_\rho}[W^2_{j-1}] - \overrightarrow{(W^2)}^\ell_j ) 
- \frac{\ell-1}{\ell^2} W_j(W_j- W_{j+1}) 
- \frac{1}{\ell} W_{j-1} (W_{j-1} - W_j) .
\end{aligned}
\]
All terms in the right-hand side can be treated by a direct $L^2$ computation, which gives the bound $C T^2 \ell^{-2} \|\partial_x \varphi\|^2_{L^2(\mathbb{R})}$. Hence we obtained the desired estimate and complete the proof. 
\end{proof}

Now we see that the order-three terms with distinct indices do not affect the limit if they are centered. When $T \ge 1/n^2$, taking $\ell \sim \sqrt{T}n$, the bound in Lemma \ref{lem:order3} becomes $CT^{3/2} n^{-1} \|\partial_x \varphi \|^2_{L^2(\mathbb{R})}$. Moreover, since $E[(\overrightarrow{W}^\ell_j)^6]=O(\ell^{-3})$, we have that 
\[
\mathbb{E}_n \bigg[ \sup_{0 \le t \le T} \bigg| \frac{1}{\sqrt{n}} \int_0^t \sum_{j \in \mathbb{Z} }
(\overrightarrow{W}^\ell_{j+1}(s) )^3
\nabla^n \varphi^n_j (s) ds \bigg|^2 \bigg]
\le C \frac{T^2}{\ell^2} \| \partial_x \varphi \|^2_{L^2(\mathbb{R})}
\le C \frac{T^{3/2}}{n} \| \partial_x \varphi \|^2_{L^2(\mathbb{R})} .
\]
Combining both estimates, we have a bound 
\[
\mathbb{E}_n \bigg[ \sup_{0 \le t \le T} \bigg| \frac{1}{\sqrt{n}} \int_0^t \sum_{j \in \mathbb{Z} }
\overline{W}_{j-1}(s) \overline{W}_{j}(s) \overline{W}_{j+1}(s) 
\nabla^n \varphi^n_j (s) ds \bigg|^2 \bigg]
\le C \frac{T^{3/2}}{n} \| \partial_x \varphi \|^2_{L^2(\mathbb{R})} .
\]
On the other hand, for the case $T < 1/n^2$, a direct $L^2$ computation yields 
\[
\begin{aligned}
& \mathbb{E}_n \bigg[ \sup_{0 \le t \le T} \bigg| \frac{1}{\sqrt{n}} \int_0^t \sum_{j \in \mathbb{Z} }
\overline{W}_{j-1}(s) \overline{W}_{j}(s) \overline{W}_{j+1}(s) 
\nabla^n \varphi^n_j (s) ds \bigg|^2 \bigg] \\
& \quad 
\le C T^2 \| \partial_x \varphi \|^2_{L^2(\mathbb{R})} 
\le C \frac{T^{3/2}}{n} \| \partial_x \varphi \|^2_{L^2(\mathbb{R})} .
\end{aligned}
\]
We can control $\overline{W}_{j-2}\overline{W}_{j-1}\overline{W}_{j}$, $\overline{W}_{j-2}\overline{W}_{j-1}\overline{W}_{j+1}$ and $\overline{W}_{j-2}\overline{W}_{j}\overline{W}_{j+1}$ in the same way. 

Now we return to expand $W_j - \eta_j$ in terms of $W_j$'s. In the expansion \eqref{eq:expansion1}, we can make indices of order-three terms distinct by using \eqref{eq:expansion2}. Moreover, note that when replacing $W_j$ by its centered version $\overline{W}_j = W_j - \Phi_n(\rho)$, terms of the form $(1/n) \overline{W}_{j_1} \overline{W}_{j_2}$ with $j_1\neq j_2$ and constants do not affect the limit. In other words, in \eqref{eq:expansion1} and \eqref{eq:expansion2}, we can replace $W_{j-1}W_j W_{j+1}$ and $W_{j-1}W_j$ by $\overline{W}_{j-1}\overline{W}_{j}\overline{W}_{j+1} + 3\Phi_n(\rho)^2 W_j$ and $\overline{W}_{j-1}\overline{W}_{j} + 2\Phi_n(\rho) W_j$, respectively. As a consequence, $W_j - \eta_j $ can be expanded as  
\begin{equation}
\label{expansion}
\frac{ g^{ \prime \prime } (0) }{ 2 g^\prime (0) } \frac{ \overline{W}_{ j -1 } \overline{ W}_j }{ \sqrt{n} } 
+ \bigg(  \frac{b_1}{ \sqrt{n} } + \frac{b_0}{n} \bigg) \eta_j 
\end{equation}
plus some asymptotically small factor. Here the constants $b_0$ and $b_1$ is of the form \eqref{eq:framing}. This expansion immediately completes the proof of Lemma \ref{antisymm}.

\section*{Acknowledgments}
The author would like to thank Makiko Sasada and Hayate Suda for giving him fruitful comments and discussions.

\bibliographystyle{abbrv}
\bibliography{reference}

\end{document}